\newtheorem{theorem}{Theorem}[section]
\newtheorem{proposition}[theorem]{Proposition}
\newtheorem{corollary}[theorem]{Corollary}
\theoremstyle{definition}
\newtheorem{example}[theorem]{Example}
\newcommand{\clb}{\mathscr{B}}
\newcommand{\clh}{\mathcal{H}}
\newcommand{\clk}{\mathcal{K}}
\newcommand{\clm}{\mathcal{M}}
\newcommand{\cln}{\mathcal{N}}
\newcommand{\clr}{\mathcal{R}}
\newcommand{\cls}{\mathcal{S}}
\newcommand{\raro}{\rightarrow}
\newcommand{\vp}{\varphi}
\newcommand{\bd}{\mathbb{D}}
\title[Invariant subspaces of idempotents]{Invariant subspaces of idempotents on Hilbert spaces}
\author[Bala]{Neeru Bala}
\address{Indian Statistical Institute, Statistics and Mathematics Unit, 8th Mile, Mysore Road, Bangalore, 560 059, India}
\email{neerusingh41@gmail.com }
\author[Ghosh]{Nirupam Ghosh}
\address{Indian Statistical Institute, Statistics and Mathematics Unit, 8th Mile, Mysore Road, Bangalore, 560 059, India}
\email{nirupamghoshmath@gmail.com}
\author[Sarkar]{Jaydeb Sarkar}
\address{Indian Statistical Institute, Statistics and Mathematics Unit, 8th Mile, Mysore Road, Bangalore, 560 059, India}
\email{jaydeb@gmail.com, jay@isibang.ac.in}
\subjclass{47A15, 46B50, 47B15, 15A60, 47B10}
\keywords{Idempotents, orthogonal projections, invariant subspaces, quasinilpotent operators, essentially idempotent operators, commutators}
\numberwithin{equation}{section}
\begin{document}

%\today

\begin{abstract}
In the setting of operators on Hilbert spaces, we prove that every quasinilpotent operator has a non-trivial closed invariant subspace if and only if every pair of idempotents with a quasinilpotent commutator has a non-trivial common closed invariant subspace. We also present a geometric characterization of invariant subspaces of idempotents and classify operators that are essentially idempotent.
\end{abstract}
\maketitle

\tableofcontents

\section{Introduction}\label{sec: intro}

This note deals with idempotents, pairs of (not necessarily commuting) idempotents, essentially idempotents, and the issue of invariant subspaces of bounded linear operators on Hilbert spaces. Recall that a bounded linear operator $T$ on a Hilbert space $\clh$ (in short, $T \in \clb(\clh)$) is said to be \textit{idempotent} if
\[
T^2 = T.
\]
If, in addition, $T^* = T$, then we say that $T$ is an orthogonal projection. Throughout this note, all Hilbert spaces are assumed to be complex and separable.

There are many reasons to study idempotent operators. However, from the present perspective, we remark that the invariant subspace problem has an affirmative solution if and only if every pair of Hilbert space idempotents has a common non-trivial closed invariant subspace (see Nordgren, Radjavi and Rosenthal \cite{NORDgren} and also see \cite{NRRR}). Recall that the celebrated invariant subspace problem asks  \cite{RADJAVI, RADJAVI ROSENTHA INV}: Does every bounded operator on an infinite-dimensional Hilbert space have a non-trivial closed invariant subspace?

From this point of view, it is also important to remark that there exist triples of idempotents without a common non-trivial closed invariant subspace (see Davis \cite{Davis}).

Not only the answer to the general invariant subspace (equivalently, the reformulation of Nordgren, Radjavi and Rosenthal as stated above) problem is unknown, but even for selective operators, like essentially self-adjoint idempotents, finite-rank perturbations of normal operators, and quasinilpotent operators, the problem is mysteriously unsettled. In fact, the study of invariant subspaces of quasinilpotent operators is still indefinite (however, see Apostol and Voiculescu \cite{Voiculescu}, Foia\c{s} and Pearcy \cite{FOIAS}, Herrero \cite{Herrero}, and the monograph \cite{RADJAVI}).

Along the lines of \cite{NORDgren}, Bernik and Radjavi \cite{BERNIK RADJAVI} proved that every essentially self-adjoint operator has a non-trivial closed invariant subspace if and only if every pair of essentially self-adjoint idempotents has a common non-trivial closed invariant subspace. Recall that $T \in \clb(\clh)$ is \textit{essentially self-adjoint} if
\[
T - T^* \in \clk(\clh),
\]
where $\clk(\clh)$ denotes the closed ideal of compact operators on $\clh$. Recall also that an operator $T \in \clb(\clh)$ is \textit{quasinilpotent} if
\[
\sigma(T) = \{0\},
\]
where $\sigma(T)$ denotes the spectrum of $T$. The quasinilpotent counterpart of Bernik and Radjavi theorem is one of our main results (see Theorem \ref{thm: quasi nil}):

\begin{theorem}
The following properties of operators on Hilbert spaces are equivalent:
\begin{enumerate}
\item Every quasinilpotent operator has a non-trivial closed invariant subspace.
\item  Every pair of idempotents with a quasinilpotent commutator has a non-trivial common closed invariant subspace.
\end{enumerate}
\end{theorem}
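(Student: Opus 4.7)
The plan is to prove the equivalence by producing a correspondence, in each direction, between invariant subspaces of one operator and common invariant subspaces of a pair of idempotents. The two directions are handled by separate constructions.

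For the implication $(2) \Rightarrow (1)$, given a quasinilpotent $T \in \clb(\clh)$, I would work on $\clh \oplus \clh$ with the idempotents
\[
P = \begin{pmatrix} I & I \\ 0 & 0 \end{pmatrix}, \qquad Q = \begin{pmatrix} I & 0 \\ T & 0 \end{pmatrix}.
\]
A direct $2 \times 2$ block computation yields $P^2 = P$, $Q^2 = Q$ and
\[
[P, Q]^2 = (T + T^2)\, I_{\clh \oplus \clh}.
\]
Since $T$ is quasinilpotent and commutes with the invertible operator $I + T$, the spectrum of $T(I+T)$ is $\{0\}$, and hence $[P, Q]$ is quasinilpotent. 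I would then show that every common closed invariant subspace $\clm$ of $P$ and $Q$ decomposes as $\clm = M_1 \oplus M_2$ for closed subspaces $M_1, M_2 \subseteq \clh$ satisfying $M_2 \subseteq M_1$ and $T M_1 \subseteq M_2$. The inclusions $M_1 \oplus M_2 \subseteq \clm$ and $T M_1 \subseteq M_2 \subseteq M_1$ (whence $T$-invariance of $M_1$) follow directly from the invariance relations. The reverse inclusion $\clm \subseteq M_1 \oplus M_2$ requires iterating $P$ and $Q$ to derive $((I+T)^k x, 0) \in \clm$ for all $k \geq 1$ and $(x,y) \in \clm$, and then invoking the Neumann series $(I+T)^{-1} = \sum_{k \geq 0} (-T)^k$, which converges because $T$ is quasinilpotent, to conclude $x \in M_1$. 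Assuming (2), a non-trivial such $\clm$ exists, and the splitting gives either a proper non-zero closed $T$-invariant subspace $M_1$, or $M_1 = \clh$ and then $\overline{T\clh} \subseteq M_2 \subsetneq \clh$ is a proper closed $T$-invariant subspace.

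For the implication $(1) \Rightarrow (2)$, I would start with idempotents $P, Q$ on $\clh$ with $[P,Q]$ quasinilpotent, disposing of the trivial cases $P, Q \in \{0, I\}$. Decomposing $\clh = \operatorname{Ran}(P) \oplus \ker(P)$ and writing $Q = \begin{pmatrix} Q_{11} & Q_{12} \\ Q_{21} & Q_{22} \end{pmatrix}$ in the induced block form, the relation $Q^2 = Q$ yields the algebraic identity $Q_{12} Q_{21} = Q_{11}(I - Q_{11})$, while $[P, Q]^2$ is block-diagonal with entries $-Q_{12} Q_{21}$ and $-Q_{21} Q_{12}$. Quasinilpotence of $[P, Q]$ forces $Q_{11}(I - Q_{11})$ to be quasinilpotent, hence $\sigma(Q_{11}) \subseteq \{0, 1\}$ by the spectral mapping theorem. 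In each of the three possibilities $\sigma(Q_{11}) = \{0\}$, $\{1\}$ or $\{0, 1\}$, I would extract a non-trivial closed $Q_{11}$-invariant subspace $M \subseteq \operatorname{Ran}(P)$: by applying hypothesis (1) to the quasinilpotent $Q_{11}$ or $I - Q_{11}$ in the first two cases, and by the Riesz spectral projection in the third (with a symmetric argument involving $Q_{22}$ on $\ker(P)$ handling the degenerate case of one-dimensional $\operatorname{Ran}(P)$). Setting
\[
M' := \overline{\operatorname{span}}\{Q_{22}^k Q_{21} m : k \geq 0,\ m \in M\} \subseteq \ker(P),
\]
I would claim that $\clm := M \oplus M'$ is a non-trivial common closed invariant subspace of $P$ and $Q$.

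The main obstacle is the verification $Q_{12} M' \subseteq M$, which is what forces the extended subspace $\clm$ to be $Q$-invariant. The key algebraic input is the identity $Q_{12} Q_{22}^k = (I - Q_{11})^k Q_{12}$, obtained by induction from $Q^2 = Q$; combined with $Q_{12} Q_{21} = Q_{11}(I - Q_{11})$, it gives $Q_{12} Q_{22}^k Q_{21} = (I - Q_{11})^{k+1} Q_{11}$, which lies in $M$ since $M$ is $Q_{11}$-invariant. The bookkeeping of the degenerate cases (especially when $\operatorname{Ran}(P)$ or $\ker(P)$ is very small) and the careful reduction from a $Q_{11}$-invariant subspace to a $Q$-invariant subspace while preserving $P$-invariance appear to be the most subtle parts of the argument.
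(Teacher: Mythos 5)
Your proof is correct, and both directions depart from the paper's argument in ways worth recording. For $(2)\Rightarrow(1)$ the paper uses the Nordgren--Radjavi--Rosenthal pair $T_1=\left[\begin{smallmatrix} A& A\\ I-A& I-A\end{smallmatrix}\right]$, $T_2=\left[\begin{smallmatrix} I&0\\0&0\end{smallmatrix}\right]$ and verifies quasinilpotence of the commutator by combining the identity $(T_1-T_2)^4-(T_1-T_2)^2=D^2$ with the Barraa--Boumazgour theorem on spectra of products of idempotents; your pair gives $[P,Q]^2=\operatorname{diag}(T+T^2,\,T+T^2)$ by a two-line computation, so quasinilpotence follows from the spectral mapping theorem alone and no external spectral result is needed. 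The price is that your $Q$ is not a coordinate projection, so extracting $M_1$ from $\clm$ requires the Neumann-series step $x=(I+T)^{-1}\bigl((I+T)x\bigr)\in M_1$ --- legitimate, since $M_1$ is closed and $T$-invariant and $T$ has spectral radius zero --- whereas the paper gets the splitting $\cls=T_2\cls\dotplus(I-T_2)\cls$ for free. For $(1)\Rightarrow(2)$ the two arguments are structurally parallel (compress the second idempotent to the range of the first, find an invariant subspace there via the hypothesis or a Riesz projection, then inflate), but you work directly in the oblique decomposition $\clh=\clr(P)\dotplus\cln(P)$, deriving $\sigma(Q_{11})\subseteq\{0,1\}$ from $[P,Q]^2=\operatorname{diag}(-Q_{12}Q_{21},-Q_{21}Q_{12})$ together with $Q_{12}Q_{21}=Q_{11}(I-Q_{11})$, again bypassing both Ando's representation and the Barraa--Boumazgour theorem; your verification of $Q_{12}M'\subseteq M$ via $Q_{12}Q_{22}^k=(I-Q_{11})^kQ_{12}$ is sound, and in fact the companion identity $Q_{22}Q_{21}=Q_{21}(I-Q_{11})$ shows your $M'$ collapses to $\overline{Q_{21}M}$, which is exactly the paper's one-layer subspace $\overline{P_{\cln(T_1^*)}Q\cls}$. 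The degenerate cases you flag ($P$ or $Q$ trivial, $\dim\clr(P)\le 1$) are real but routine, and the paper's own proof glosses over the same ones.
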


In the proof of the above theorem and most of the other results of this paper, we make use of geometric representations of idempotent operators \cite[Theorem 2.6]{ANDO}. To state the result, we first set up some notation. Throughout the paper, for each  $T \in \clb(\clh)$, we denote by $\clr(T)$ and $\cln(T)$ the range space and the null space of $T$, respectively. Also, we will use interchangeably the identity
\[
\cln(T^*) = \clr(T)^\perp,
\]
and given a closed subspace $\cls \subseteq \clh$, we denote by $P_{\cls}$ the orthogonal projection onto $\cls$. Now we are ready to state the geometric representation of idempotents (see Theorem \ref{thm: Ando}): Let $T \in\clb(\clh)$ be an idempotent. Then $P_{\cln(T^*)}|_{\cln(T)}: \cln(T) \raro \cln(T^*)$ is invertible, and
\[
T=\begin{bmatrix}
I_{\clr(T)}& - P_{\clr(T)}|_{\cln(T)} (P_{\cln(T^*)}|_{\cln(T)})^{-1}
\\
0&0
\end{bmatrix},
\]
on $\clh = \clr(T) \oplus \cln(T^*)$. The present version is somewhat more rigorous than that of \cite[Theorem 2.6]{ANDO}.

It is worthwhile to pause for a moment and recall invariant subspaces of idempotents on Banach spaces. Let $X$ be a Banach space, and let $T\in\clb(X)$ be an idempotent. Then invariant subspaces of $T$ are all subspaces of $X$ of the form $\clr \dotplus \cln$, where $\clr$ and $\cln$ are closed subspaces of $\clr(T)$ and $\cln(T)$, respectively (cf. \cite[Proposition 3]{ALLAN}). Evidently, if $X$ is a Hilbert space, then this representation is not (explicitly) compatible with the geometry of Hilbert spaces. From this point of view, and to some extent, Theorem \ref{thm all invariant subspace} takes care of the geometry of ambient spaces.

\begin{theorem}
Let $T\in\clb(\clh)$ be an idempotent, and let $\cls$ be a closed subspace of $\clh$. Then $\cls$ is invariant under $T$ if and only if $\cls$ is invariant under $(P_{\cln(T^*)}|_{\cln(T)})^{-1} P_{\cln(T^*)}$.
\end{theorem}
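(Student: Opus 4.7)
The plan is to reduce the claim to the triviality that $\cls$ is invariant under $T$ if and only if $\cls$ is invariant under $I - T$. This reduction will follow from the identification
\[
(P_{\cln(T^*)}|_{\cln(T)})^{-1} P_{\cln(T^*)} = I - T,
\]
which is really the only ingredient that needs to be verified.

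To establish this identity, I would set $V := (P_{\cln(T^*)}|_{\cln(T)})^{-1} : \cln(T^*) \to \cln(T)$, so that the left-hand side becomes $A := V P_{\cln(T^*)}$. Recall that $I - T$ is the (generally oblique) idempotent with range $\cln(T)$ and kernel $\clr(T)$; since $\clh = \clr(T) \dotplus \cln(T)$ by the idempotence of $T$, it suffices to check that $A$ vanishes on $\clr(T)$ and restricts to the identity on $\cln(T)$. The first assertion is immediate because $\clh = \clr(T) \oplus \cln(T^*)$ is an \emph{orthogonal} decomposition, so $P_{\cln(T^*)}$ annihilates $\clr(T)$. The second is built into the definition of $V$: for $k \in \cln(T)$ one has $A k = V(P_{\cln(T^*)}|_{\cln(T)}) k = k$. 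Hence $A$ and $I - T$ agree on the two summands of $\clr(T) \dotplus \cln(T)$, and the identification follows.

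With the identification in hand, the equivalence stated in the theorem is free: for any subspace $\cls \subseteq \clh$ one has $T\cls \subseteq \cls$ iff $(I-T)\cls \subseteq \cls$, since $T + (I-T) = I$ so each of $T, I-T$ can be expressed as a polynomial in the other.

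I do not anticipate a genuine obstacle; the content of the proof is really the one-line identity $I - T = V P_{\cln(T^*)}$, which can also be read off directly from the Ando matrix representation recorded in the introduction (the bottom row of $T$ in that representation, together with the shape of $\cln(T)$, forces $I - T$ to take exactly this form). Thus the theorem is essentially a geometric reformulation, in Hilbert-space terms, of the algebraic description $\clr \dotplus \cln$ of idempotent-invariant subspaces.
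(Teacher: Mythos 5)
Your proposal is correct and rests on exactly the same key fact as the paper's proof, namely the operator identity $(P_{\cln(T^*)}|_{\cln(T)})^{-1} P_{\cln(T^*)} = I - T$ (the paper derives this pointwise as $(A_T P_{\cln(T^*)})x = x - Tx$ by decomposing $x$ along $\clr(T)\oplus\cln(T^*)$ and then along $\clr(T)\dotplus\cln(T)$). Your verification of the identity directly on the two summands of the oblique decomposition $\clh = \clr(T)\dotplus\cln(T)$ is a slightly cleaner route to the same conclusion.
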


Now we turn to essentially idempotent operators. An operator $T \in \clb(\clh)$ is \textit{essentially idempotent} if $T$ is idempotent modulo the compact operators, that is
\[
T^2-T \in \clk(\clh).
\]
In Theorem \ref{prop essential idempotent}, we present a complete classification of essentially idempotent operators:

\begin{theorem}
Let $T \in \clb(\clh)$. Then $T$ is essentially idempotent if and only if one of the following holds:
\begin{enumerate}
\item $T-I\in\clk(\clh)$.
\item $T\in\clk(\clh)$.
\item $T-S\in\clk(\clh)$ for some idempotent $S \in \clb(\clh)$ such that both $\clr(S)$ and $\cln(S^*)$ are infinite-dimensional.
\end{enumerate}
\end{theorem}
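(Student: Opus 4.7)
The plan is to pass to the Calkin algebra $\clq := \clb(\clh)/\clk(\clh)$, observe that $T$ being essentially idempotent is equivalent to $\pi(T)$ being an idempotent in $\clq$ (where $\pi$ denotes the quotient map), and classify $T$ by the essential spectrum $\sigma_e(T) = \sigma(\pi(T)) \subseteq \{0, 1\}$. The reverse direction, that each of (1), (2), (3) implies $T^2 - T \in \clk(\clh)$, is a direct calculation using that $\clk(\clh)$ is a two-sided ideal: writing $T$ as $I + K$, $K$, or $S + K$ with $K$ compact and $S$ idempotent, and expanding $T^2 - T$ in each case produces a compact operator.

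For the forward direction, I first observe that an idempotent $e$ in a unital Banach algebra with $\sigma(e) = \{0\}$ must be $0$ (it is both quasinilpotent and idempotent, and $e^n = e$ for all $n \geq 1$ forces $\|e\|^{1/n} \to r(e) = 0$, hence $\|e\| = 0$), and symmetrically $\sigma(e) = \{1\}$ forces $e = 1$ by applying the previous fact to $1 - e$. Applied to $\pi(T) \in \clq$, this splits into three cases: $\sigma_e(T) = \{0\}$ yields $T \in \clk(\clh)$, which is (2); $\sigma_e(T) = \{1\}$ yields $T - I \in \clk(\clh)$, which is (1); and in the remaining case $\sigma_e(T) = \{0, 1\}$ one must construct the idempotent $S$ required in (3).

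For this last case, $\sigma(T) \setminus \sigma_e(T)$ consists of isolated eigenvalues of finite multiplicity whose only possible accumulation points lie in $\{0, 1\}$. I therefore choose disjoint open disks $V_0 \ni 0$ and $V_1 \ni 1$ with bounding circles disjoint from $\sigma(T)$, together with small disjoint open neighborhoods of the (necessarily finitely many) isolated eigenvalues lying outside $V_0 \cup V_1$; let $V$ be their union, so that $V \supseteq \sigma(T)$. Define $f : V \raro \bc$ to be $1$ on $V_1$ and $0$ elsewhere. Then $f$ is locally constant (hence holomorphic) and satisfies $f^2 = f$, so $S := f(T)$ is an idempotent by the Riesz holomorphic functional calculus. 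Naturality of the calculus under the unital homomorphism $\pi$ gives $\pi(S) = f(\pi(T))$, and since $\pi(T)$ is an idempotent with spectrum $\{0, 1\}$, the resolvent identity $(zI - \pi(T))^{-1} = (1 - \pi(T))/z + \pi(T)/(z - 1)$ yields $f(\pi(T)) = f(0)(1 - \pi(T)) + f(1)\pi(T) = \pi(T)$. Hence $T - S \in \clk(\clh)$.

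It remains to show that both $\clr(S)$ and $\cln(S^*)$ are infinite-dimensional. Since $\sigma(\pi(S)) = \{0, 1\}$, $\pi(S)$ is neither $0$ nor $1$ in $\clq$, so neither $S$ nor $I - S$ is compact. A finite-dimensional $\clr(S)$ would make $S$ finite-rank and hence compact, contradicting the first; a finite-dimensional $\cln(S) = \clr(I - S)$ would do the same for $I - S$. Combining the orthogonal decomposition $\clh = \clr(S) \oplus \cln(S^*)$ with the algebraic decomposition $\clh = \clr(S) \dotplus \cln(S)$ then gives $\dim \cln(S^*) = \mathrm{codim}\, \clr(S) = \dim \cln(S) = \infty$. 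The main delicate step is arranging the construction of $f$ so that it is locally constant on a neighborhood of the whole spectrum $\sigma(T)$ (absorbing any isolated eigenvalues away from $\{0, 1\}$), and then transferring the identity $f(\pi(T)) = \pi(T)$ through the quotient map to conclude that $T - S$ is compact.
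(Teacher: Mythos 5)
Your proposal is correct, but it reaches the conclusion by a genuinely different route from the paper. The paper's proof rests on Olsen's lifting theorem for polynomially compact operators: since $T^2-T\in\clk(\clh)$, there is a compact $K$ with $S:=T+K$ exactly idempotent; the trichotomy is then extracted by conjugating $S$ to the orthogonal projection $P_{\clr(S)}$ (Corollary \ref{cor: idemp proj}) and using that $\pi(T)$ is normal in the Calkin algebra together with the $C^*$-spectral radius formula. You instead build the idempotent yourself: you dispose of the cases $\sigma_e(T)=\{0\}$ and $\sigma_e(T)=\{1\}$ by the purely Banach-algebraic observation that a quasinilpotent idempotent is $0$ (which is cleaner and more general than the paper's appeal to normality in a $C^*$-algebra), and in the case $\sigma_e(T)=\{0,1\}$ you take $S=f(T)$ to be the Riesz projection for the part of $\sigma(T)$ near $1$, checking $\pi(S)=f(\pi(T))=\pi(T)$ by naturality of the holomorphic calculus and the explicit resolvent of an idempotent. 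This buys something the paper's $S=T+K$ does not give for free: your $S$ commutes with $T$. The cost is that you must know that the contour can be drawn at all, i.e.\ that $\sigma(T)\setminus\sigma_e(T)$ is a discrete set of eigenvalues of finite multiplicity accumulating only in $\{0,1\}$. That statement is false for a general operator (the unilateral shift has $\sigma(U)\setminus\sigma_e(U)$ equal to the open disc); it holds here because $\mathbb{C}\setminus\{0,1\}$ is a \emph{connected} component of the Fredholm domain meeting the resolvent set, so the punctured-neighbourhood/analytic Fredholm theorem applies. You state the conclusion correctly but should make this reasoning (or a citation, e.g.\ to \cite{GOLDBERG}) explicit, since it is the one nontrivial external input replacing Olsen's theorem. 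The remaining steps --- the infinite-dimensionality of $\clr(S)$ and $\cln(S^*)$ via non-compactness of $S$ and $I-S$, and the equality $\dim\cln(S^*)=\dim\cln(S)$ from comparing the orthogonal and algebraic complements of the closed subspace $\clr(S)$ --- are sound, as is the routine reverse direction.
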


We apply this to essentially commuting pairs of idempotents. A pair of bounded linear operators $(T_1, T_2)$ on $\clh$ is called \textit{essentially commuting} if
\[
T_1T_2-T_2T_1 \in \clk(\clh).
\]
We have the following (see Corollary \ref{cor: ess comm idemp}): Let $(T_1,T_2)$ be a pair of essentially commuting idempotents on $\clh$. Then one of the following holds:
\begin{enumerate}
\item $T_1+T_2-I\in\clk(\clh)$.
\item $T_1-T_2\in\clk(\clh)$.
\item There exists an idempotent $S \in \clb(\clh)$ such that $(T_1+T_2-I)S\in\clk(\clh)$.
\end{enumerate}

For the reverse direction, (1) as well as (2) implies that $(T_1,T_2)$ is essentially commuting. However, condition (3) does not necessarily imply that $(T_1,T_2)$ is essentially commuting (see Example \ref{example: conv}).

It is worthwhile to note that the commutator $D=T_1T_2-T_2T_1$ plays an important role in the problem of common invariant subspaces of a pair of idempotents $(T_1, T_2)$. Indeed, since $D^2$ commutes with both $T_1$ and $T_2$, it follows that $T_1$ and $T_2$ have a common invariant subspace whenever $D^2$ has a non-trivial hyperinvariant subspace.

The rest of this paper is divided into four sections. In Section \ref{sec: rep of idem}, we describe representations of idempotents. The main result here is due to Ando \cite{ANDO}, however, our presentation is different from the original one and in many respects appropriately customized to our needs in the latter part of the paper. Section \ref{sec:quasi nil} deals with invariant subspaces of quasinilpotent operators. Section \ref{sec: inv sub idemp} focuses on invariant subspaces of idempotents. Finally, Section \ref{sec-ess idemp} deals with classifications and representations of essentially idempotent operators.

\section{Representations of idempotents}\label{sec: rep of idem}

In this section, we collect some of the results and tools which will be used in later sections. The results are known, but our presentation is different, which is also essential for the main contribution of this paper.

We begin with a result of Ando \cite[Theorem 2.6]{ANDO} concerning representations of idempotents. Here we present a complete proof, which also takes care of the alignment of subspaces and orthogonal projections (compare the statement and the proof of \cite[Theorem 2.6]{ANDO}). Recall that for $T \in \clb(\clh)$, we have $\cln(T^*) = \clr(T)^\perp$.

\begin{theorem}\label{thm: Ando}
Let $T \in\clb(\clh)$ be an idempotent. Then $P_{\cln(T^*)}|_{\cln(T)}: \cln(T) \raro \cln(T^*)$ is invertible, and
\[
T=\begin{bmatrix}
I_{\clr(T)}& - P_{\clr(T)}|_{\cln(T)} (P_{\cln(T^*)}|_{\cln(T)})^{-1}
\\
0&0
\end{bmatrix} \text{ on } \clh = \clr(T) \oplus \cln(T^*).
\]
\end{theorem}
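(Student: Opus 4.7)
My plan is to derive everything from two standard structural facts for an idempotent $T$: the range $\clr(T) = \cln(I-T)$ is closed, and $\clh$ decomposes as an algebraic (not necessarily orthogonal) direct sum $\clh = \clr(T) \dotplus \cln(T)$, via $h = Th + (h - Th)$. In parallel, the closedness of $\clr(T)$ gives the orthogonal decomposition $\clh = \clr(T) \oplus \cln(T^*)$. The whole theorem is essentially a translation between these two decompositions of $\clh$, and the operator $Q := P_{\cln(T^*)}|_{\cln(T)}$ is the bijection implementing this translation.

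First I would establish that $Q : \cln(T) \to \cln(T^*)$ is bijective. Injectivity: if $x \in \cln(T)$ and $Qx = 0$, then $x \in \cln(T^*)^\perp = \clr(T)$, so $x \in \clr(T) \cap \cln(T) = \{0\}$ (using the oblique direct sum). Surjectivity: given $y \in \cln(T^*)$, use $\clh = \clr(T) \dotplus \cln(T)$ to write $y = a + b$ with $a \in \clr(T)$ and $b \in \cln(T)$; then $Qb = P_{\cln(T^*)}(y-a) = y$ because $a \in \clr(T) \perp \cln(T^*)$. Boundedness of $Q$ is automatic, and bounded invertibility of $Q^{-1}$ follows from the open mapping theorem applied between the Banach spaces $\cln(T)$ and $\cln(T^*)$.

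Next I would compute the matrix of $T$ on $\clh = \clr(T) \oplus \cln(T^*)$. Write $h = h_1 + h_2$ with $h_1 \in \clr(T)$ and $h_2 \in \cln(T^*)$. Then $T h_1 = h_1$, so the first column of the matrix is as claimed. For the second column, I decompose $h_2$ obliquely as $h_2 = a + b$ with $a \in \clr(T)$ and $b \in \cln(T)$; by the surjectivity argument above, $b = Q^{-1} h_2$. Since $Tb = 0$, we get $T h_2 = a = h_2 - b$, which lies in $\clr(T)$. Projecting the equation $a = h_2 - b$ orthogonally onto $\clr(T)$ and using $h_2 \perp \clr(T)$ yields
\[
T h_2 \;=\; a \;=\; -P_{\clr(T)} b \;=\; -P_{\clr(T)}|_{\cln(T)} \bigl(P_{\cln(T^*)}|_{\cln(T)}\bigr)^{-1} h_2,
\]
which is exactly the $(1,2)$-entry in the stated matrix, while the $(2,2)$-entry vanishes because $T h_2 \in \clr(T)$.

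The only subtle step I anticipate is the simultaneous bookkeeping of the two direct-sum decompositions of $\clh$ when identifying the off-diagonal entry; the arithmetic is trivial once one fixes the convention that $Q^{-1} h_2$ is the unique element of $\cln(T)$ whose oblique $\cln(T)$-coordinate produces $h_2$ under the orthogonal projection onto $\cln(T^*)$. Everything else (range closedness, oblique direct sum, automatic boundedness of $Q^{-1}$) is standard, and no deeper input is needed beyond the idempotency of $T$.
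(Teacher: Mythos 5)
Your proof is correct and follows essentially the same route as the paper: both arguments rest on playing the oblique decomposition $\clh = \clr(T) \dotplus \cln(T)$ against the orthogonal one $\clh = \clr(T) \oplus \cln(T^*)$, proving injectivity of $P_{\cln(T^*)}|_{\cln(T)}$ from $\clr(T) \cap \cln(T) = \{0\}$ and surjectivity from the oblique splitting of a vector in $\cln(T^*)$. The only difference is stylistic: you identify the $(1,2)$-entry by a pointwise computation on $h_2 = a + b$, whereas the paper does the equivalent manipulation at the level of operator identities via $P_{\clr(T)}TP_{\cln(T^*)}P_{\cln(T)}$; your explicit appeal to the open mapping theorem for boundedness of the inverse is a detail the paper leaves implicit.
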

\begin{proof}
First note that
\begin{equation}\label{eqn: range same}
\clr(P_{\cln(T^*)} P_{\cln(T)}) = \cln(T^*).
\end{equation}
Indeed, $\clr(P_{\cln(T^*)} P_{\cln(T)}) \subseteq \cln(T^*)$ follows from the Douglas range inclusion theorem. For the other inclusion, let $x \in \cln(T^*) = \clr(I - P_{\clr(T)})$, and write $x = (I - P_{\clr(T)}) y$ for some $y\in \clh$. Write $y = y_1 + y_2$, where $y_1 \in \clr(T)$ and $y_2 \in \cln(T)$. Then
\[
x = (I - P_{\clr(T)}) (y_1 + y_2) = (I - P_{\clr(T)}) y_2 = P_{\cln(T^*)} y_2 = P_{\cln(T^*)} P_{\cln(T)} y_2,
\]
proves \eqref{eqn: range same}. Next we show that $P_{\cln(T^*)}|_{\cln(T)}: \cln(T) \raro \cln(T^*)$ is invertible. In view of \eqref{eqn: range same}, it is enough to prove that $P_{\cln(T^*)}|_{\cln(T)}$ is injective. Let
\[
P_{\cln(T^*)}x = 0,
\]
for some $x \in \cln(T)$. Then $x = P_{\clr(T)} x$, that is, $x \in \clr(T)$, and hence $x \in \cln(T) \cap \clr(T)$. Then $x = 0$ and hence $P_{\cln(T^*)}|_{\cln(T)}$ is invertible. Therefore
\[
A_T := (P_{\cln(T^*)}|_{\cln(T)})^{-1}: \cln(T^*) \raro \cln(T),
\]
is well defined. Now we turn to the block operator matrix representation of $T$ with respect to $\clh = \clr(T) \oplus \cln(T^*)$. Since $T^2 = T$, it follows that $T P_{\clr(T)} = P_{\clr(T)}$, which also implies that $P_{\cln(T^*)} T|_{\cln(T^*)} = 0$. In particular, $T|_{\clr(T)} = I_{\clr(T)}$ and hence
\[
T =\begin{bmatrix}
I_{\clr(T)}& P_{\clr(T)} T|_{\cln(T^*)}
\\
0&0
\end{bmatrix}\in \clb(\clr(T) \oplus \cln(T^*)).
\]
To complete the proof it suffices to prove that $P_{\clr(T)} T|_{\cln(T^*)} = - P_{\clr(T)}|_{\cln(T)} A_T$. We have
\[
\begin{split}
P_{\clr(T)} T (P_{\cln(T^*)} P_{\cln(T)}) & = P_{\clr(T)} (T - T P_{\clr(T)}) P_{\cln(T)}
\\
& = - P_{\clr(T)} T P_{\clr(T)} P_{\cln(T)},
\end{split}
\]
as $T P_{\cln(T)} = 0$. Again, by $T P_{\clr(T)} = P_{\clr(T)}$, we have
\[
\begin{split}
P_{\clr(T)} T (P_{\cln(T^*)} P_{\cln(T)}) & = - P_{\clr(T)} P_{\cln(T)}
\\
& = - P_{\clr(T)}|_{\cln(T)} A_T (P_{\cln(T^*)}|_{\cln(T)}) P_{\cln(T)}
\\
& = - P_{\clr(T)}|_{\cln(T)} A_T (P_{\cln(T^*)} P_{\cln(T)}).
\end{split}
\]
Then \eqref{eqn: range same} implies that $P_{\clr(T)} T|_{\cln(T^*)} = - P_{\clr(T)}|_{\cln(T)} A_T$ and completes the proof of the theorem.
\end{proof}

Particularly, we have the following similarity between an idempotent and its corresponding projection.

\begin{corollary}\label{cor: idemp proj}
If $T \in \clb(\clh)$ is an idempotent, then $T = V P_{\clr(T)} V^{-1}$, where
\[
V = \begin{bmatrix}
I_{\clr(T)}& P_{\clr(T)}|_{\cln(T)} (P_{\cln(T^*)}|_{\cln(T)})^{-1}
\\
0& I_{\cln(T^*)}
\end{bmatrix}\in \clb(\clr(T) \oplus \cln(T^*)).
\]
\end{corollary}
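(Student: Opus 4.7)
The plan is to read off the claim as a direct algebraic consequence of the matrix decomposition furnished by Theorem \ref{thm: Ando}. Throughout, I will work with the orthogonal decomposition $\clh = \clr(T) \oplus \cln(T^*)$ and abbreviate
\[
B := P_{\clr(T)}|_{\cln(T)} (P_{\cln(T^*)}|_{\cln(T)})^{-1} : \cln(T^*) \raro \clr(T),
\]
so that, by Theorem \ref{thm: Ando}, $T = \begin{bmatrix} I_{\clr(T)} & -B \\ 0 & 0 \end{bmatrix}$ and, with respect to the same decomposition, $P_{\clr(T)} = \begin{bmatrix} I_{\clr(T)} & 0 \\ 0 & 0 \end{bmatrix}$ and $V = \begin{bmatrix} I_{\clr(T)} & B \\ 0 & I_{\cln(T^*)} \end{bmatrix}$.

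First I would observe that $V$ is invertible: being an upper-triangular $2\times 2$ block operator with identity operators on the diagonal and the bounded operator $B$ in the corner, a direct check shows that
\[
V^{-1} = \begin{bmatrix} I_{\clr(T)} & -B \\ 0 & I_{\cln(T^*)} \end{bmatrix},
\]
since the product in either order yields the block identity. This is the only step that requires a brief remark, and it is not really an obstacle.

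Next, I would compute $V P_{\clr(T)} V^{-1}$ directly by block multiplication. The inner product $V P_{\clr(T)}$ gives $\begin{bmatrix} I_{\clr(T)} & 0 \\ 0 & 0 \end{bmatrix}$ because the second block column of $P_{\clr(T)}$ is zero; multiplying this on the right by $V^{-1}$ yields $\begin{bmatrix} I_{\clr(T)} & -B \\ 0 & 0 \end{bmatrix}$, which coincides with the matrix representation of $T$ from Theorem \ref{thm: Ando}.

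I expect no serious obstacle: the entire argument is essentially a two-line block-matrix calculation once Theorem \ref{thm: Ando} is in hand. The only care needed is bookkeeping of the domains and codomains of the various corner operators, so that the products such as $V P_{\clr(T)} V^{-1}$ genuinely make sense as bounded operators on $\clr(T) \oplus \cln(T^*)$; this is ensured by the fact that $B \in \clb(\cln(T^*), \clr(T))$, which is built into the statement and proof of Theorem \ref{thm: Ando}.
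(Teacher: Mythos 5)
Your proof is correct and follows essentially the same route as the paper: both invoke the block representation of $T$ from Theorem \ref{thm: Ando}, note the explicit form of $V^{-1}$, and verify the identity $T = V P_{\clr(T)} V^{-1}$ by direct block multiplication on $\clr(T) \oplus \cln(T^*)$.
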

\begin{proof}
The proof follows at once from the representations of $P_{\clr(T)}$ and $V^{-1}$ on $\clh = \clr(T) \oplus \cln(T^*)$ as $P_{\clr(T)} = \begin{bmatrix}
I_{\clr(T)} & 0
\\
0&0
\end{bmatrix}$ and $V^{-1} = \begin{bmatrix}
I_{\clr(T)}& -P_{\clr(T)}|_{\cln(T)} (P_{\cln(T^*)}|_{\cln(T)})^{-1}
\\
0& I_{\cln(T^*)}
\end{bmatrix}$, respectively.
\end{proof}

In particular, we also have
\[
\text{Lat} T = \{V \cls: \cls \in \text{Lat} P_{\clr(T)}\},
\]
where $\text{Lat}T$ denotes the lattice of invariant subspaces of $T$. We will return to this theme in Section \ref{sec: inv sub idemp} (more specifically, see \eqref{eqn: lattice}).

\section{Idempotents with quasinilpotent commutators}\label{sec:quasi nil}

In this section, we connect the problem of invariant subspaces of quasinilpotent operators with the invariant subspaces of pairs of idempotents admitting quasinilpotent commutators. Again, recall that $T \in \clb(\clh)$ is \textit{quasinilpotent} if $\sigma(T) = \{0\}$.

We recall the following classical fact about Riesz projections \cite[Theorem 2.2, Page 10]{GOLDBERG}. Let $T\in \clb(\clh)$ and let $\sigma \subseteq \sigma(T)$ be an isolated part of $\sigma(T)$ (that is, both $\sigma$ and $\sigma(T) \setminus \sigma$ are closed subsets of $\sigma(T)$). Then the Riesz functional calculus
\[
P_\sigma = \frac{1}{2\pi i}\int_{\gamma}(\lambda-T)^{-1}d\lambda,
\]
is an idempotent, where $\gamma$ is a contour separating $\sigma$ and $\sigma(T)\setminus\sigma$. Moreover, $\clr(P_\sigma)$ and $\cln(P_\sigma)$ are invariant under $T$, and
\[
\sigma(T|_{\clr(P_\sigma)})=\sigma \text{ and } \sigma(T|_{\cln(P_\sigma)})=\sigma(T)\setminus\sigma.
\]
We also need the spectral behaviour of products of idempotents \cite[Theorem 1]{BARRAA}.

\begin{theorem}\label{thm spectra of difference}
Let $p$ and $q$ be idempotents in a Banach algebra. Then
\[
\sigma(pq)\setminus\{0,1\}=\{1-\mu^2:\mu\in\sigma(p-q)\setminus\{0, \pm 1\}\}.
\]
\end{theorem}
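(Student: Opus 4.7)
The plan is to build everything from the single algebraic identity $(p-q)^2 + (1-p-q)^2 = 1$, which one verifies by straightforward expansion using $p^2 = p$ and $q^2 = q$. Writing $r := 1-p-q$, this identity reads $r^2 = 1 - (p-q)^2$, so the polynomial spectral mapping theorem gives $\sigma(r^2) = \{1 - \mu^2 : \mu \in \sigma(p-q)\}$. Because $1 - \mu^2 = 1 \iff \mu = 0$ and $1 - \mu^2 = 0 \iff \mu = \pm 1$, this rewrites as
\[
\sigma(r^2) \setminus \{0,1\} = \{1 - \mu^2 : \mu \in \sigma(p-q) \setminus \{0, \pm 1\}\},
\]
and the theorem reduces to the equality $\sigma(pq) \setminus \{0,1\} = \sigma(r^2) \setminus \{0,1\}$.

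Next I would exploit the companion identities $pq + (1-q)(1-p) = r^2$ and $pq \cdot (1-q)(1-p) = (1-q)(1-p) \cdot pq = 0$, both obtained by direct expansion. Setting $D := (1-q)(1-p)$, these yield
\[
(\lambda - pq)(\lambda - D) = (\lambda - D)(\lambda - pq) = \lambda(\lambda - r^2) \qquad (\lambda \in \bc),
\]
with the two left-hand factors commuting. For any $\lambda \ne 0$, this immediately gives $\sigma(pq) \setminus \{0\} \subseteq \sigma(r^2)$: if $\lambda - pq$ were non-invertible, then, because it commutes with $\lambda - D$, the product $\lambda(\lambda - r^2)$ would be non-invertible, forcing $\lambda - r^2$ itself to be non-invertible.

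For the reverse inclusion I would run an approximate-eigenvector argument. Given $\lambda \in \sigma(r^2) \setminus \{0,1\}$, choose $\mu$ with $\mu^2 = 1 - \lambda$, so $\mu \ne 0, \pm 1$; the factorisation
\[
\lambda - r^2 = (p-q)^2 - \mu^2 = (p-q-\mu)(p-q+\mu)
\]
(commuting factors) forces one of $p - q \pm \mu$ to be non-invertible. After relabelling the sign of $\mu$, assume $p - q - \mu$ is non-invertible, hence a topological left or right divisor of zero in the ambient Banach algebra. In the left-divisor case, a direct expansion of $pqp\,\xi$ using $(p-q-\mu)\xi = \eta$ gives
\[
(pq - \lambda)(p\xi) = \bigl(pq - (1+\mu)p\bigr)\eta,
\]
so $(pq - \lambda)(p\xi_n) \to 0$ whenever $\eta_n \to 0$. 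One then shows $\|p\xi_n\|$ is bounded below along a subsequence: otherwise $q\xi_n + \mu\xi_n \to 0$, and applying $q$ once more yields $(1+\mu)q\xi_n \to 0$, forcing $\xi_n \to 0$ when $\mu \ne -1$ and contradicting $\|\xi_n\| = 1$. A symmetric right-sided computation handles the right-divisor case under the condition $\mu \ne 1$, so the hypothesis $\mu \ne 0, \pm 1$ exactly covers both subcases. Hence $pq - \lambda$ is itself a topological zero divisor, in particular non-invertible, and $\lambda \in \sigma(pq)$.

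The algebraic identities and the easy direction are essentially formal. The main obstacle is the reverse inclusion, where one must manufacture approximate null sequences for $pq - \lambda$ out of those for $p - q \pm \mu$ and verify that the new sequences do not collapse; the exclusions $\mu \in \{0, \pm 1\}$ are exactly what is needed to make the `bounded below' arguments succeed.
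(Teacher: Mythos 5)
First, a point of reference: the paper offers no proof of this statement --- Theorem \ref{thm spectra of difference} is quoted verbatim from Barraa and Boumazgour \cite{BARRAA} --- so your argument is being measured against the literature rather than against an in-paper proof. Your algebraic scaffolding is correct and self-contained: the identities $(p-q)^2+(1-p-q)^2=1$, $pq+(1-q)(1-p)=r^2$ and $pq\cdot(1-q)(1-p)=(1-q)(1-p)\cdot pq=0$ all check out; the reduction via the spectral mapping theorem to the equality $\sigma(pq)\setminus\{0,1\}=\sigma(r^2)\setminus\{0,1\}$ is right; and the commuting factorization $(\lambda-pq)(\lambda-D)=\lambda(\lambda-r^2)$ does give the inclusion $\sigma(pq)\setminus\{0\}\subseteq\sigma(r^2)$ cleanly. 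The key computation $(pq-\lambda)(p\xi)=\bigl(pq-(1+\mu)p\bigr)\eta$ for $(p-q-\mu)\xi=\eta$ is also correct, as is the lower bound on $\|p\xi_n\|$ under $\mu\ne 0,-1$.

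The genuine gap is the sentence ``non-invertible, hence a topological left or right divisor of zero in the ambient Banach algebra.'' That implication is false in a general Banach algebra: in the disc algebra the element $z$ is not invertible, yet $\|zf\|_\infty=\|f\|_\infty$ for every $f$ by the maximum principle, so $z$ is neither a left nor a right topological divisor of zero. Since the theorem is stated for an arbitrary Banach algebra, the reverse inclusion as written does not go through. What is true, and what your argument implicitly relies on, is the operator dichotomy: a non-invertible $T\in\clb(\clh)$ either fails to be bounded below (approximate null vectors for $T$) or fails to have dense range (null vectors for $T^*$). With that dichotomy your proof is complete for $\clb(\clh)$ --- the non-dense-range case is handled by running the same computation for the idempotents $p^*,q^*$ with parameter $\bar\mu$ and passing back through the Jacobson identity $\sigma(pq)\setminus\{0\}=\sigma(qp)\setminus\{0\}$ --- and $\clb(\clh)$ is all the paper ever uses Theorem \ref{thm spectra of difference} for. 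To recover the general Banach algebra statement you would need either to embed $A$ unitally and isometrically into $\clb(A)$ via the left regular representation (which preserves spectra) and carry out the non-dense-range case on the dual space, or to replace the reverse inclusion by a purely algebraic argument as in \cite{BARRAA}. As it stands, the ``left or right topological divisor of zero'' dichotomy is the one step that is genuinely wrong at the stated level of generality, and it needs to be repaired rather than merely reworded.
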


Now we are ready for our invariant subspace theorem for quasinilpotent operators. Throughout the proof, we will always assume that subspaces under consideration are closed.

\begin{theorem}\label{thm: quasi nil}
Every quasinilpotent operator has a non-trivial invariant subspace if and only if every pair of idempotents with a quasinilpotent commutator has a non-trivial common invariant subspace.
\end{theorem}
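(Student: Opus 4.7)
The plan is to prove the two implications separately, both by reducing to a $2\times 2$ block analysis via Corollary~\ref{cor: idemp proj}.

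For $(2)\Rightarrow(1)$, given a nonzero quasinilpotent $T\in\clb(\clh)$, I would associate to $T$ the pair of idempotents on $\clh\oplus\clh$
\[
P_1=\begin{bmatrix} I & 0 \\ 0 & 0 \end{bmatrix}, \qquad P_2=\begin{bmatrix} T & T-T^2 \\ I & I-T \end{bmatrix}.
\]
Direct block multiplication confirms $P_i^2=P_i$ and
\[
[P_1,P_2]^2=-\begin{bmatrix} T-T^2 & 0 \\ 0 & T-T^2 \end{bmatrix},
\]
which is quasinilpotent because $T-T^2$ inherits quasinilpotency from $T$ via the spectral mapping theorem; consequently $[P_1,P_2]$ is quasinilpotent. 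By hypothesis $(2)$, $P_1$ and $P_2$ share a non-trivial closed invariant subspace $\clm$. Since $P_1$ is an orthogonal projection, $\clm$ reduces $P_1$, so $\clm=\clm_1\oplus\clm_2$. Testing $P_2$-invariance on $(x,0)\in\clm$ and $(0,y)\in\clm$ yields $T\clm_i\subseteq\clm_i$ for $i=1,2$ together with $\clm_1\subseteq\clm_2$. A brief case analysis, using that $0\oplus\clh$ is $P_2$-invariant only if $T=T^2$ (impossible for nonzero quasinilpotent $T$), then shows that at least one of $\clm_1,\clm_2$ is a non-trivial $T$-invariant subspace.

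For $(1)\Rightarrow(2)$, let $T_1,T_2$ be non-trivial idempotents with $D=[T_1,T_2]$ quasinilpotent. By Corollary~\ref{cor: idemp proj} there is an invertible $V_1$ with $V_1^{-1}T_1V_1=P_{\clr(T_1)}$, and conjugation preserves idempotency of $T_2$, the lattice of common invariant subspaces (via $\clm\mapsto V_1^{-1}\clm$), and quasinilpotency of $D$; so I may assume $T_1=\begin{bmatrix} I & 0 \\ 0 & 0 \end{bmatrix}$ on $\clh=\clh_1\oplus\clh_2$ with $\clh_1=\clr(T_1)$, and write $T_2=\begin{bmatrix} A & B \\ C & D' \end{bmatrix}$. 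The relation $T_2^2=T_2$ gives $BC=A-A^2$ and $CA+D'C=C$, and $[T_1,T_2]^2=-\mathrm{diag}(BC,CB)$; quasinilpotency of the commutator thus forces $BC$, and hence $A-A^2$, to be quasinilpotent, so $\sigma(A)\subseteq\{0,1\}$ by spectral mapping.

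The strategy is now to produce a non-trivial closed $A$-invariant subspace $\clm\subseteq\clh_1$ and lift it to the common invariant subspace $\clm\oplus\overline{C\clm}$; the required verification uses $B\overline{C\clm}\subseteq\overline{BC\clm}=\overline{(A-A^2)\clm}\subseteq\clm$ and $D'\overline{C\clm}\subseteq\overline{D'C\clm}=\overline{C(I-A)\clm}\subseteq\overline{C\clm}$, and non-triviality of the lift is immediate from $\clm\ne 0,\clh_1$. Existence of $\clm$ is handled by a trichotomy on $\sigma(A)\subseteq\{0,1\}$: apply hypothesis $(1)$ to the quasinilpotent $A$ or $A-I$ when $\sigma(A)\in\{\{0\},\{1\}\}$, and use the Riesz projection at an isolated spectral point when $\sigma(A)=\{0,1\}$. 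The principal obstacle is the degenerate case $\dim\clh_1=1$, in which $A$ is scalar and has no proper invariant subspace; since $\clh$ is infinite-dimensional one has $\dim\clh_2\ge 2$, so this case can be dealt with by taking $\clm=\clh_1$ itself, whereupon the lift $\clh_1\oplus\overline{C\clh_1}$ is still a common invariant subspace by the same verification and is proper in $\clh$ because $C\clh_1$ has dimension at most one.
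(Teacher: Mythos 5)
Your proof is correct, and while the $(1)\Rightarrow(2)$ direction shares its skeleton with the paper's argument, both directions deviate in ways worth recording. For $(1)\Rightarrow(2)$, the paper and you both conjugate $T_1$ to $P_{\clr(T_1)}$ via Corollary \ref{cor: idemp proj}, extract the $(1,1)$ corner $A$ of the conjugated $T_2$, run the same trichotomy on $\sigma(A)\subseteq\{0,1\}$ (hypothesis for $\{0\}$ or $\{1\}$, Riesz projection for $\{0,1\}$), and lift via $\clm\oplus\overline{C\clm}$, which is exactly the paper's $\cls\oplus\overline{P_{\cln(T_1^*)}Q\cls}$. The difference is how $\sigma(A)\subseteq\{0,1\}$ is obtained: the paper goes through the identity \eqref{eqn D2} and the Barraa--Boumazgour result (Theorem \ref{thm spectra of difference}) to first control $\sigma(T_1T_2)$, whereas you read it off directly from the idempotent relation $BC=A-A^2$ together with $[T_1,T_2]^2=-\mathrm{diag}(BC,CB)$; this makes the argument self-contained and, as a bonus, your explicit treatment of the degenerate case $\dim\clr(T_1)=1$ fills a small gap that the paper's assertion $\{0\}\subsetneqq\cls\subsetneqq\clr(T_1)$ silently skips. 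For $(2)\Rightarrow(1)$, your pair is genuinely different from the Nordgren--Radjavi--Rosenthal pair used in the paper: with $P_2=\begin{bmatrix}T&T-T^2\\ I&I-T\end{bmatrix}$ the commutator with $\mathrm{diag}(I,0)$ squares to $-\mathrm{diag}(T-T^2,T-T^2)$, so quasinilpotency is immediate without invoking Theorem \ref{thm spectra of difference}, and your extraction of a $T$-invariant subspace from $\clm=\clm_1\oplus\clm_2$ is a clean four-way case analysis (using $\clm_1\subseteq\clm_2$ and that a nonzero quasinilpotent cannot be idempotent) that avoids the paper's reduction to the case where $0$ lies in the continuous spectrum of $A$. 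The only thing you leave implicit is the trivial case where one of the idempotents is $0$ or $I$ (so that $\clr(T_1)$ or $\cln(T_1^*)$ vanishes and the block decomposition degenerates), but that case is disposed of by taking $\clr(T_2)$ or $\cln(T_2)$, and the paper is equally silent about it.
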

\begin{proof}
Suppose every quasinilpotent operator has a non-trivial invariant subspace. Fix a Hilbert space $\clh$ and suppose $T_1$ and $T_2$ be two idempotents on $\clh$. Set
\[
D = T_1 T_2 - T_2 T_1,
\]
and assume that $\sigma(D)=\{0\}$. Since $T_1$ and $T_2$ are idempotents, an easy calculation shows that (cf. \cite{ALLAN})
\begin{equation}\label{eqn D2}
(T_1-T_2)^4-(T_1-T_2)^2=(T_1T_2-T_2T_1)^2.
\end{equation}
By the spectral mapping theorem
\[
\{\lambda^4-\lambda^2:\lambda\in\sigma(T_1-T_2)\} = \{0\},
\]
and hence, $\sigma(T_1-T_2)\subseteq\{0, \pm 1\}$. Theorem \ref{thm spectra of difference} then implies that $\sigma(T_1T_2)\subseteq\{0,1\}$. In view of Theorem \ref{thm: Ando}, on $\clh = \clr(T_1) \oplus \cln(T_1^*)$, write
\[
T_1=\begin{bmatrix}
I&X_{T_1}
\\
0&0
\end{bmatrix},
\]
where $X_{T_1} = - P_{\clr(T_1)}|_{\cln(T_1)} (P_{\cln(T_1^*)}|_{\cln(T_1)})^{-1}$, and set
\[
T_2=\begin{bmatrix}
A&B
\\
C&D
\end{bmatrix}.
\]
Then
\[
T_1T_2 = \begin{bmatrix}
A+X_{T_1}C&B+X_{T_1} D
\\
0&0
\end{bmatrix},
\]
which implies $\sigma(A + X_{T_1} C) \subseteq \sigma(T_1T_2) \subseteq \{0,1\}$. We claim that $A + X_{T_1} C$ has a non-trivial invariant subspace. Indeed, if $\sigma(A + X_{T_1} C)=\{0,1\}$, then the discussion preceding the statement of this theorem implies that $A + X_{T_1} C$ has a non-trivial invariant subspace. On the other hand, $A + X_{T_1} C$ or $A + X_{T_1} C - I$ is quasinilpotent according to
\[
\sigma(A + X_{T_1} C)=\{0\} \text{ or } \{1\}.
\]
Our assumption guarantees that $A + X_{T_1} C$ or $A + X_{T_1} C- I$ has a non-trivial invariant subspace. For the latter case, if a closed subspace $\cls$ is invariant under $A + X_{T_1} C- I$, then it is easy to see that $\cls$ is also invariant under $A + X_{T_1} C$. Consequently, $A + X_{T_1} C$ has a non-trivial invariant subspace, which we denote by $\cls$. Note that $\{0\} \subsetneqq \cls \subsetneqq \clr(T_1)$.

\noindent We now proceed to check that $T_1$ and $T_2$ have a common non-trivial invariant subspace. By Corollary \ref{cor: idemp proj}, we have that $P_{\clr(T_1)} = V_1^{-1} T_1 V_1$, where
\[
V_1 = \begin{bmatrix}
I_{\clr(T_1)}& -X_{T_1}
\\
0& I_{\clr(T_1)^\perp}
\end{bmatrix}\in \clb(\clr(T_1) \oplus \cln(T_1^*)).
\]
Set
\[
Q = V_1^{-1} T_2 V_1,
\]
and consider the subspace
\[
\clm = \cls \oplus \overline{P_{\cln(T_1^*)} Q \cls}.
\]
Since $\cls \subsetneqq \clr(T_1)$, it follows that $\clm$ is a proper subspace of $\clh$. We claim that $\clm$ is invariant under $P_{\clr(T_1)}$ and $Q$. Observe that the above matrix representation of $T_1T_2$ implies
\[
A+X_{T_1}C = P_{\clr(T_1)} T_1 T_2|_{\clr(T_1)}.
\]
On the other hand, by using $T_1 = V_1 P_{\clr(T_1)} V_1^{-1}$ and $T_2 = V_1 Q V_1^{-1}$, we have
\[
T_1 T_2 = V_1 P_{\clr(T_1)} Q V_1^{-1},
\]
which along with $V_1 P_{\clr(T_1)} = P_{\clr(T_1)}$ implies that
\[
P_{\clr(T_1)} T_1 T_2|_{\clr(T_1)} = P_{\clr(T_1)}(V_1 P_{\clr(T_1)} Q V_1^{-1})|_{\clr(T_1)} = P_{\clr(T_1)} Q|_{\clr(T_1)},
\]
and hence
\[
A+X_{T_1}C = P_{\clr(T_1)} Q|_{\clr(T_1)}.
\]
Since $\cls \subsetneqq \clr(T_1)$, it follows that
\[
P_{\clr(T_1)} \clm = P_{\clr(T_1)} \cls = \cls \subseteq \clm,
\]
that is, $\clm$ is invariant under $P_{\clr(T_1)}$. Next, we observe that
\[
P_{\clr(T_1)} Q \cls = P_{\clr(T_1)} Q|_{\clr(T_1)} \cls = (A+X_{T_1}C) \cls \subseteq \cls.
\]
Note that, by the definition of $\clm$, we have
\[
(I - P_{\clr(T_1)}) Q \cls = P_{\cln(T_1^*)} Q \cls \subseteq \clm,
\]
and hence the above inclusion implies that $Q \cls \subseteq \clm$. This and the above inclusion, again, imply that
\[
\begin{split}
Q P_{\cln(T_1^*)} Q \cls & = Q (I - P_{\clr(T_1)})Q \cls
\\
& \subseteq Q \cls + Q P_{\clr(T_1)} Q \cls
\\
& \subseteq Q \cls + Q \cls
\\
& \subseteq \clm.
\end{split}
\]
Thus we have proved that $Q \cls \subseteq \clm$ and $Q P_{\cln(T_1^*)} Q \cls \subseteq \clm$, which clearly implies that $\clm$ is invariant under $Q$. Therefore, $\clm$ is a non-trivial common invariant subspace for $P_{\clr(T_1)}$ and $Q$, and consequently $V_1\clm$ is a non-trivial common invariant subspace for $T_1$ and $T_2$.

We now turn to the converse. Assume that every pair of idempotents with a quasinilpotent commutator has a non-trivial common invariant subspace. Let $A\in\clb(\clh)$ be a quasinilpotent operator. Our aim is to prove that $A$ has a non-trivial invariant subspace. Following \cite{NORDgren}, define
\begin{align*}
T_1=\begin{bmatrix}
A&A
\\
I-A&I-A
\end{bmatrix}\text{ and }T_2=\begin{bmatrix}
I&0\\
0&0
\end{bmatrix}.
\end{align*}
Then \[
T_1T_2=\begin{bmatrix}
	A&0\\
	I-A&0
\end{bmatrix}.\]
Since $\sigma(T_1T_2)=\sigma(A)\cup\{0\}$, we have $\sigma(T_1T_2)=\{0\}$. By Theorem \ref{thm spectra of difference}, we know that
\[
\sigma(T_1-T_2)\subseteq\{0,\pm 1\}.
\]
We also know (see \eqref{eqn D2}) that
\[
D^2=(T_1T_2-T_2T_1)^2=(T_1-T_2)^4-(T_1-T_2)^2.
\]
By the spectral mapping theorem, we have $\sigma(D)=\{0\}$. Therefore, by assumption, $T_1$ and $T_2$ have a common non-trivial invariant subspace, say $\cls \subsetneqq \clh \oplus \clh$. Then
\[
\cls = T_2 \cls \dotplus (I-T_2)\cls.
\]
The rest of the proof follows a similar line as in the proof of the theorem in \cite[page 66]{NORDgren}. In the present case, also, the algebra generated by $I$, $T_1$, and $T_2$ contains
\[
\begin{bmatrix}
0&A
\\
0&0
\end{bmatrix} \text{ and }\begin{bmatrix}
0&0\\
I-A&0
\end{bmatrix}.
\]
Now, we claim that $T_2\cls$ and $(I-T_2)\cls$ are proper subspaces of $\clh$. Without loss of generality, we assume $0$ is in the continuous spectrum of $A$, that is $A$ and $A^*$ are injective but $\clr(A)$ is not closed (indeed, in other cases, $A$ will have a non-trivial invariant subspace). If $T_2 \cls =\clh$, then $x\oplus (I-A)y \in \cls$ for all $x,y \in \clh$. Since $(I-A)$ is invertible, it follows that $\cls = \clh\oplus\clh$. Similarly, if $(I-T_2)\cls = \clh$, then $Ax\oplus y\in \cls$ for all $x,y\in\clh$, would imply that $\cls = \clh\oplus\clh$, as $\overline{\clr(A)}=\clh$. Since $\cls$ is non-trivial, either $T_2\cls$ or $(I-T_2)\cls$ is nonzero. Hence either $T_2\cls$ or $(I-T_2)\cls$ is a non-trivial invariant subspace for $A$. This completes the proof of the theorem.
\end{proof}

In the above proof, the commutator $T_1T_2-T_2T_1$ plays an important role. As already pointed out in Section \ref{sec: intro}, commutators are in general useful in the study of the joint invariant subspaces of pairs of idempotents \cite{ALLAN}. In Section \ref{sec-ess idemp}, we also study representations of essentially commuting pairs of idempotents.

\section{Invariant subspaces of idempotents}\label{sec: inv sub idemp}

As we have already pointed out in Section \ref{sec: intro}, the invariant subspaces of an idempotent $T$ acting on some Banach space $X$ are all closed subspaces of $X$ of the form $\clr \dotplus \cln$, where $\clr$ is a closed subspace of $\clr(T)$ and $\cln$ is a closed subspace of $\cln(T)$. However, in the setting of Hilbert spaces, this representation does not in general carry all the geometry of the ambient spaces. In the following, we examine how the geometry of the Hilbert space method fits into the picture of invariant subspaces of idempotents.

\begin{theorem}\label{thm all invariant subspace}
Let $T\in\clb(\clh)$ be an idempotent, and let $\cls$ be a closed subspace of $\clh$. Then $\cls$ is invariant under $T$ if and only if $\cls$ is invariant under $(P_{\cln(T^*)}|_{\cln(T)})^{-1} P_{\cln(T^*)}$.
\end{theorem}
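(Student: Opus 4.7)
Set $R := (P_{\cln(T^*)}|_{\cln(T)})^{-1} P_{\cln(T^*)}$, regarded as a bounded operator on $\clh$ whose range lies in $\cln(T)$. The plan is to identify $R$ with $I - T$, after which the desired equivalence is trivial: for any operator $S \in \clb(\clh)$ and any subspace $\cls$ of $\clh$, invariance under $S$ and invariance under $I - S$ are equivalent, since $\cls$ is always invariant under $I$, so $S\cls \subseteq \cls$ forces $(I-S)\cls \subseteq \cls + S\cls \subseteq \cls$, and conversely. Thus the whole content of the theorem is the operator identity $R = I - T$.

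To prove this identity, I would use Theorem \ref{thm: Ando} and compute matrix forms with respect to the orthogonal decomposition $\clh = \clr(T) \oplus \cln(T^*)$. Writing $A_T := (P_{\cln(T^*)}|_{\cln(T)})^{-1}$ and $x = x_1 \oplus x_2$ with $x_1 \in \clr(T)$, $x_2 \in \cln(T^*)$, one has $Rx = A_T x_2$. Since $A_T x_2 \in \cln(T)$ and $P_{\cln(T^*)} A_T = I_{\cln(T^*)}$ by the very definition of $A_T$, the decomposition of $A_T x_2$ in $\clr(T) \oplus \cln(T^*)$ reads $(P_{\clr(T)}|_{\cln(T)} A_T x_2) \oplus x_2$. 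This gives
\[
R = \begin{bmatrix} 0 & P_{\clr(T)}|_{\cln(T)} A_T \\ 0 & I_{\cln(T^*)} \end{bmatrix},
\]
while subtracting the representation of $T$ supplied by Theorem \ref{thm: Ando} from the identity matrix yields exactly the same $2\times 2$ block, so $R = I - T$.

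A more conceptual route, which I would record as a remark, is that both $R$ and $I - T$ are the oblique (generally non-orthogonal) projection of $\clh$ onto $\cln(T)$ along $\clr(T)$, relative to the topological direct sum $\clh = \clr(T) \dotplus \cln(T)$ induced by the idempotent $T$: the operator $R$ vanishes on $\clr(T)$ because $\clr(T) \perp \cln(T^*)$, and it fixes $\cln(T)$ pointwise since $A_T$ inverts $P_{\cln(T^*)}|_{\cln(T)}$; these two properties characterize the projection uniquely, and $I - T$ obviously shares them. I do not anticipate a real obstacle in executing this plan; the only point requiring mild care is the correct interpretation of domains and codomains in the composition $A_T \circ P_{\cln(T^*)}$, which is handled cleanly by Theorem \ref{thm: Ando}.
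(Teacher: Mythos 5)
Your proposal is correct and follows essentially the same route as the paper: the paper's proof likewise establishes the identity $(P_{\cln(T^*)}|_{\cln(T)})^{-1} P_{\cln(T^*)} = I - T$ (by computing $(A_T P_{\cln(T^*)})x = x - Tx$ pointwise via the decomposition $x_2 = u_1 \dotplus u_2 \in \clr(T) \dotplus \cln(T)$, rather than by your block-matrix subtraction) and then concludes from the trivial equivalence of invariance under $T$ and under $I-T$. Your verification that $P_{\cln(T^*)} A_T = I_{\cln(T^*)}$ and the resulting matrix identity are sound, so no gaps remain.
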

\begin{proof}
Let $x \in \cls$ and write $x = x_1 \oplus x_2 \in \clr(T) \oplus \cln(T^*) = \clh$. By Theorem \ref{thm: Ando}, we have
\[
Tx = x_1 - P_{\clr(T)}|_{\cln(T)} A_T x_2 \in \clr(T),
\]
where $A_T = (P_{\cln(T^*)}|_{\cln(T)})^{-1}: \cln(T^*) \raro \cln(T)$. We decompose $x_2$ further as
\[
x_2 = u_1 \dotplus u_2 \in \clr(T) \dotplus \cln(T).
\]
Since
\[
P_{\clr(T)} u_2 = P_{\clr(T)} (x_2 - u_1) = - P_{\clr(T)} u_1 = -u_1,
\]
it follows that
\[
P_{\cln(T^*)}|_{\cln(T)} u_2 = P_{\clr(T)^\perp} u_2 = u_2 - P_{\clr(T)} u_2 = u_2 + u_1 = x_2,
\]
and hence $A_T x_2 = (P_{\cln(T^*)}|_{\cln(T)})^{-1} x_2 = u_2$. Therefore
\[
P_{\clr(T)}|_{\cln(T)} A_T x_2 = P_{\clr(T)} u_2 = P_{\clr(T)}(x_2 - u_1) = -u_1,
\]
that is, $P_{\clr(T)}|_{\cln(T)} A_T x_2 = -u_1$. Then
\[
T x = x_1 + u_1 = x_1 + x_2 - u_2 = x - u_2,
\]
from which we deduce
\[
(A_T P_{\cln(T^*)}) x = u_2 = x - Tx.
\]
Now, if we assume that $\cls$ is invariant under $T$, then $u_2 = x - Tx \in \cls$ implies that $\cls$ is invariant under $A_T P_{\clr(T)^\perp}$.	Conversely, suppose that $(A_T P_{\cln(T^*)}) \cls \subseteq \cls$, and suppose $x \in \cls$. Then from the above equality, it follows that
\[
Tx = x - (A_T P_{\cln(T^*)}) x \in \cls,
\]
that is, $\cls$ is invariant under $T$, which completes the proof of the theorem.
\end{proof}

In particular, if $T \in \clb(\clh)$ is an idempotent, then
\begin{equation}\label{eqn: lattice}
\text{Lat} T = \text{Lat} ((P_{\cln(T^*)}|_{\cln(T)})^{-1} P_{\cln(T^*)}).
\end{equation}

The following simple example illustrates the basic idea of the above theorem.

\begin{example}
Let $\varphi: \bd \rightarrow \bd$ be a holomorphic self-map of the unit disc $\bd = \{z \in \mathbb{C}: |z| < 1\}$. We consider the composition operator $C_{\vp}: H^2(\bd) \rightarrow H^2(\bd)$ defined by
\[
C_{\vp}f = f\circ\vp \qquad (f \in H^2(\bd)),
\]
where $H^2(\bd)$ denotes the Hardy space over $\bd$. It is easy to see that $C_{\vp}$ is an idempotent if and only if either $\vp$ is constant or $\vp$ is the identity map. If $\vp$ is the identity map, then $C_{\vp}$ is also the identity map and in this case, any closed subspace of $H^2(\bd)$ is an invariant subspace for $C_{\vp}$. Next we consider the case where $\vp$ is constant. Suppose $\vp \equiv \alpha$ for some $\alpha \in \bd$. Then, $\clr(C_\vp) = \mathbb{C}$ the one-dimensional space of all constant functions in $H^2(\bd)$, and
\[
\cln(C_\vp) = (z-\alpha) H^2(\bd).
\]
In particular, $\cln(C_\vp^*) = \clr(C_\vp)^\perp = z H^2(\bd)$. For each $f\in H^2(\bd)$, it follows that
\[
P_{\cln(C_\vp^*)} f = (I-P_{\clr(C_\vp)})f=f-f(0),
\]
and hence, for each $g \in H^2(\bd)$ and $a \in \mathbb{C}$, we have
\[
P_{\cln(C_\vp^*)} P_{\cln(C_\vp)}\Big((z-\alpha)(a + zg) \Big) = z(a +(z- \alpha)g).
\]
Therefore
\[
(P_{\cln(C_\vp^*)}|_{\cln(C_\vp)})^{-1} P_{\cln(C_\vp^*)} f = f - f(\alpha) \qquad (f \in H^2(\bd)),
\]
from which it immediately follows that a closed subspace $\cls \subseteq H^2(\bd)$ is invariant under $C_\vp$ if and only if $f - f(\alpha) \in \cls$ for all $f \in \cls$. In particular, $\text{span}\{1,z\}$ is invariant under $C_{\vp}$ but $\text{span}\{z\}$ is not.	
\end{example}

Let $T\in\clb(\clh)$ be an idempotent, and let $\cls \subseteq \cln(T^*)$ be a closed subspace. If $\cls \subseteq \cln(T)$, then $T \cls = \{0\}$, and hence $\cls$ is invariant under $T$. The converse is also true: 

\begin{corollary}\label{Prop invariant subspace}
Let $T\in\clb(\clh)$ be an idempotent, and let $\cls$ be a closed subspace of $\cln(T^*)$. Then $\cls$ is invariant under $T$ if and only if $\cls \subseteq \cln(T)$.
\end{corollary}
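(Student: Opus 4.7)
The reverse implication is immediate: if $\cls \subseteq \cln(T)$, then $T\cls = \{0\} \subseteq \cls$, so $\cls$ is invariant under $T$. The bulk of the work is the forward implication, and my plan is to observe that it reduces to a one-line orthogonality argument, with no need to invoke the full force of Ando's representation or Theorem \ref{thm all invariant subspace}.

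Suppose $\cls \subseteq \cln(T^*)$ is a closed subspace that is invariant under $T$, and let $x \in \cls$. On the one hand, invariance gives $Tx \in \cls \subseteq \cln(T^*)$. On the other hand, $Tx \in \clr(T)$ by definition. Since $\cln(T^*) = \clr(T)^\perp$, we obtain
\[
Tx \in \clr(T) \cap \clr(T)^\perp = \{0\},
\]
so $Tx = 0$ and hence $x \in \cln(T)$. As $x \in \cls$ was arbitrary, this yields $\cls \subseteq \cln(T)$, completing the proof.

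Alternatively, one can deduce the forward implication from Theorem \ref{thm all invariant subspace}: for $x \in \cls \subseteq \cln(T^*)$, the identity $(A_T P_{\cln(T^*)})x = x - Tx$ established in the proof of that theorem gives $x - Tx = A_T x \in \cln(T)$, so $Tx = T^2 x$, which is tautological; combining this with $Tx \in \cls \cap \clr(T) \subseteq \clr(T)^\perp \cap \clr(T) = \{0\}$ recovers the same conclusion. I would present the first, direct orthogonality argument, since it is the shortest and makes the geometric content transparent. There is no significant obstacle here, as the statement is essentially a direct consequence of $\cln(T^*) \perp \clr(T)$.
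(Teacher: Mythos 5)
Your proof is correct, and it takes a genuinely more elementary route than the paper's. The paper obtains the forward implication as a consequence of Theorem \ref{thm all invariant subspace}: it passes to the operator $(P_{\cln(T^*)}|_{\cln(T)})^{-1}P_{\cln(T^*)}$, uses the fact that its range lies in $\cln(T)$, and then untangles the resulting vector identity to land in $\cln(T)\cap\cln(T^*)$. You bypass all of that with the single observation that $Tx\in\cls\cap\clr(T)\subseteq\cln(T^*)\cap\clr(T)=\clr(T)^\perp\cap\clr(T)=\{0\}$, which is airtight. Notably, your argument never uses $T^2=T$ in either direction, so what you have actually proved is the stronger statement that for \emph{any} $T\in\clb(\clh)$, a closed subspace $\cls\subseteq\cln(T^*)$ is invariant under $T$ if and only if $\cls\subseteq\cln(T)$; the idempotent hypothesis is superfluous here. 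What the paper's longer route buys is only expository: it exhibits the corollary as an application of the lattice identity \eqref{eqn: lattice}, which is the theme of that section. Your second, ``alternative'' paragraph is dispensable and, as you yourself note, partly circular ($Tx=T^2x$ carries no information); I would drop it and present only the direct orthogonality argument.
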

\begin{proof}
Suppose $\cls$ is invariant under $T$. Since $\cls \subseteq \cln(T^*)$, by Theorem \ref{thm all invariant subspace}, we have
\[
(P_{\cln(T^*)}|_{\cln(T)})^{-1} \cls \subseteq \cls.
\]
Moreover, since
\[
\clr((P_{\cln(T^*)}|_{\cln(T)})^{-1}) = \cln(T),
\]
for each $x \in \cls$, we have
\[
y: = (P_{\cln(T^*)}|_{\cln(T)})^{-1}x \in \cln(T) \cap \cls \subseteq \cln(T) \cap \cln(T^*),
\]
which implies
\[
P_{\cln(T^*)} x = x = (P_{\cln(T^*)}|_{\cln(T)}) y = y.
\]
Therefore $x = y \in \cln(T) \cap \cln(T^*)$, which proves that $\cls \subseteq \cln(T) \cap \cln(T^*)$ and completes the proof of the corollary.
\end{proof}

In particular, if $T\in\clb(\clh)$ is an idempotent, then a closed subspace $\cls \subseteq \cln(T^*)$ is invariant under $T$ if and only if $T$ reduces $\cls$. Moreover, we have the following:

\begin{proposition}
Let $T\in\clb(\clh)$ be an idempotent, and let $\cls \subseteq \clr(T)$ be a closed subspace. Then $\cls$ reduces $T$ if and only if $\cls \subseteq \clr(T) \cap \clr(T^*)$.
\end{proposition}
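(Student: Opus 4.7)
The plan is to bypass the block--matrix apparatus from Ando's theorem and argue directly, using the elementary fact that a bounded idempotent acts as the identity on its range. Explicitly, if $T^2 = T$ and $x = Ty \in \clr(T)$, then $Tx = T^2 y = Ty = x$; the same applies to $T^*$, which is idempotent on $\clr(T^*)$. With this in hand, the proposition becomes a tidy statement that the components of $\cls$ that $T$ and $T^*$ move into $\cls$ must in fact be fixed.

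For the ``if'' direction, I would start from $\cls \subseteq \clr(T) \cap \clr(T^*)$. By the remark above, both $T$ and $T^*$ act as the identity on $\cls$, so trivially $T\cls \subseteq \cls$ and $T^*\cls \subseteq \cls$. The latter is equivalent to $T\cls^\perp \subseteq \cls^\perp$, so $\cls$ reduces $T$.

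For the ``only if'' direction, assume $\cls \subseteq \clr(T)$ and that $\cls$ reduces $T$; in particular $\cls$ is $T^*$-invariant, so the restriction $S := T^*|_{\cls}$ is a well-defined element of $\clb(\cls)$. Because $(T^*)^2 = T^*$, we have $S^2 = S$, so $S$ is an idempotent on $\cls$. The decisive step is to compute its kernel:
\[
\cln(S) = \cls \cap \cln(T^*) = \cls \cap \clr(T)^\perp \subseteq \clr(T) \cap \clr(T)^\perp = \{0\}.
\]
An idempotent with trivial kernel is the identity (from $S(x - Sx) = Sx - S^2 x = 0$ one gets $x = Sx$). Therefore $T^*x = x$ for every $x \in \cls$, which places $\cls$ inside $\clr(T^*)$, hence inside $\clr(T) \cap \clr(T^*)$.

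There is no real obstacle here; the whole proposition is a short exercise once one observes that the restriction of an idempotent to an invariant subspace is again idempotent, and that an injective idempotent is the identity. The only point that might deserve a sentence in the final write--up is to state explicitly that $\cls$ is $T^*$-invariant (equivalently, $\cls^\perp$ is $T$-invariant), which is precisely the second half of the reducing hypothesis.
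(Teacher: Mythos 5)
Your proof is correct, and it takes a genuinely different route from the paper. The paper's argument for the ``only if'' direction runs through the block representation of Theorem \ref{thm: Ando}: it writes $T^*$ as a lower-triangular operator matrix on $\clr(T) \oplus \cln(T^*)$, uses the $T^*$-invariance of $\cls$ to force the off-diagonal entry $X_T^*$ to annihilate each $x \in \cls$, and then unwinds the formula for $X_T$ to conclude $P_{\cln(T)}x = 0$, i.e.\ $\cls \subseteq \cln(T)^\perp = \clr(T^*)$. You instead observe that $S := T^*|_{\cls}$ is an idempotent on $\cls$ whose kernel $\cls \cap \cln(T^*) = \cls \cap \clr(T)^\perp$ is trivial because $\cls \subseteq \clr(T)$, and that an injective idempotent is the identity; hence $T^*$ fixes $\cls$ pointwise and $\cls \subseteq \clr(T^*)$. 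Your argument is shorter, avoids the invertibility of $P_{\cln(T^*)}|_{\cln(T)}$ altogether, and isolates the two elementary facts actually doing the work (an idempotent is the identity on its range; the restriction of an idempotent to an invariant subspace is an idempotent). What the paper's version buys is consistency with its overall theme of deriving everything from the geometric representation of idempotents, which it reuses throughout. The ``if'' direction is essentially the same in both treatments.
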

\begin{proof}
Suppose $\cls \subseteq \clr(T)$ reduces $T$. By Theorem \ref{thm: Ando}, on $\clh = \clr(T) \oplus \cln(T^*)$, we have
\[
T^* =\begin{bmatrix}
I_{\clr(T)}& 0
\\
X_T^*&0
\end{bmatrix},
\]
where $X_T =  - P_{\clr(T)}|_{\cln(T)} (P_{\cln(T^*)}|_{\cln(T)})^{-1}$. Fix $x \in \cls$. Since $\cls$ is invariant under $T^*$, it follows that
\[
X_T^*x = -((P_{\cln(T^*)}|_{\cln(T)})^*)^{-1} P_{\clr(T)} x = 0.
\]
Since $\cls \subseteq \clr(T)$, we have
\[
((P_{\cln(T^*)}|_{\cln(T)})^*)^{-1} x = 0,
\]
and hence $P_{\cln(T)} x = 0$. Therefore, $\cls \subseteq \cln(T)^\perp$, and hence $\cls \subseteq \clr(T)\cap \clr(T^*)$. The converse can be seen easily by the fact that $\clr(T)\cap \clr(T^*)$ is invariant under both $T$ and $T^*$.
\end{proof}

\section{Essentially idempotent operators}\label{sec-ess idemp}

In this section, we classify essentially idempotent operators. Further, we provide some basic descriptions of essentially commuting pairs of idempotents. Throughout, $\clk(\clh)$ will denote the ideal of compact operators in $\clb(\clh)$. The Calkin algebra is the quotient space $\clb(\clh)/\clk(\clh)$, the algebra of bounded linear operators modulo the compacts. Also, we denote by $\pi:\clb(\clh)\rightarrow \clb(\clh)/\clk(\clh)$ the canonical quotient map. The essential spectrum $\sigma_e(T)$ of $T \in \clb(\clh)$ is defined by $\sigma_e(T) = \sigma(\pi(T))$. It is well known that
\[
\sigma_e(T) = \{\lambda \in \mathbb{C}: T - \lambda I_{\clh} \text{ not Fredholm}\}.
\]

We begin with essentially idempotent operators. Recall that an operator $T \in \clb(\clh)$ is \textit{polynomially compact} if there exists a polynomial $p \in \mathbb{C}[z]$ such that $p(T) \in \clk(\clh)$. If $p(T) \in \clk(\clh)$ for $p=z - z^2$, then we say that $T$ is \textit{essentially idempotent}. Equivalently, an operator $T \in \clb(\clh)$ is essentially idempotent if
\[
T - T^2 \in \clk(\clh).
\]
The following classification of polynomially compact operators is due to Olsen \cite[Theorem 2.4]{OLSEN}: Let $T \in \clb(\clh)$, and suppose $p(T) \in \clk(\clh)$ for some $p \in \mathbb{C}[z]$. Then there exists a compact operator $K \in \clk(\clh)$ such that
\[
p(T+K) = 0.
\]
We are now ready for the classification of essentially idempotent operators.

\begin{theorem}\label{prop essential idempotent}
Let $T \in \clb(\clh)$. Then $T$ is essentially idempotent if and only if one of the following holds:
\begin{enumerate}
\item $T-I\in\clk(\clh)$.
\item $T\in\clk(\clh)$.
\item $T-S\in\clk(\clh)$ for some idempotent $S \in \clb(\clh)$ such that both $\clr(S)$ and $\cln(S^*)$ are infinite-dimensional.
\end{enumerate}
\end{theorem}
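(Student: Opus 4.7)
The plan is to prove the two implications separately. For the reverse direction (assuming $(1)$, $(2)$, or $(3)$), I would verify directly, using that $\clk(\clh)$ is a two-sided ideal, that $T^2 - T \in \clk(\clh)$ in each case: in $(1)$, writing $T = I + K$ with $K$ compact makes $T^2 - T = K + K^2$ compact; in $(2)$, both $T$ and $T^2$ lie in $\clk(\clh)$; in $(3)$, writing $T = S + K$ with $S^2 = S$ and $K$ compact, every term of $T^2 - T$ other than $S^2 - S = 0$ carries a compact factor.

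For the forward direction, the central ingredient is Olsen's theorem applied with $p(z) = z^2 - z$: since $T^2 - T \in \clk(\clh)$, there exists $K \in \clk(\clh)$ with $p(T + K) = 0$, so that $S := T + K$ is a genuine idempotent and $T - S = -K \in \clk(\clh)$. The proof then proceeds by case analysis on the dimensions of $\clr(S)$ and $\cln(S^*)$. If both are infinite-dimensional, we are immediately in case $(3)$. If $\dim \clr(S) < \infty$, then $S$ is finite-rank, hence compact, and $T = (T - S) + S \in \clk(\clh)$, which is case $(2)$.

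The remaining case is $\dim \cln(S^*) < \infty$; here the plan is to show that $I - S$ is finite-rank, so that $T - I = (T - S) - (I - S) \in \clk(\clh)$ gives case $(1)$. For this I would use the idempotent decomposition $\clh = \clr(S) \dotplus \cln(S)$ coming from Theorem \ref{thm: Ando}: the quotient map $\clh \to \clh / \clr(S)$ restricts to a linear isomorphism $\cln(S) \to \clh / \clr(S)$, and since $\dim \clh / \clr(S) = \dim \cln(S^*) < \infty$, this forces $\dim \cln(S) < \infty$; but $I - S$ is the idempotent with range $\cln(S)$, hence finite-rank. No step is expected to pose a serious obstacle: the essential input is Olsen's theorem, which converts the essential idempotent condition into an honest idempotent modulo a compact perturbation, and the only sub-argument of substance is the dimension-count identifying $\dim \cln(S)$ with $\dim \cln(S^*)$, which follows from the algebraic complementation intrinsic to any bounded idempotent.
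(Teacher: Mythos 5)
Your proposal is correct, and it takes a genuinely different route from the paper after the common first step. Both arguments begin identically: the reverse direction is a routine ideal computation, and the forward direction invokes Olsen's theorem with $p(z)=z^2-z$ to produce a compact $K$ with $S:=T+K$ an honest idempotent. From there the paper works in the Calkin algebra: it first treats the case where $S$ is an orthogonal projection, observes that $\pi(T)$ is then normal with $\sigma_e(T)\subseteq\{0,1\}$, and uses the spectral radius formula for normal elements of a $C^*$-algebra to conclude $\pi(T)=0$ or $\pi(T-I)=0$ when the essential spectrum is a singleton; the general idempotent case is then reduced to the projection case via the similarity $S=VP_{\clr(S)}V^{-1}$ of Corollary \ref{cor: idemp proj}. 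You instead run a direct trichotomy on $\dim\clr(S)$ and $\dim\cln(S^*)$: if $\clr(S)$ is finite-dimensional then $S$ is finite-rank and $T\in\clk(\clh)$; if $\cln(S^*)$ is finite-dimensional, the algebraic decomposition $\clh=\clr(S)\dotplus\cln(S)$ identifies $\cln(S)$ with $\clh/\clr(S)$, whose dimension equals $\dim\clr(S)^\perp=\dim\cln(S^*)<\infty$, so $I-S$ is finite-rank and $T-I\in\clk(\clh)$; otherwise you are in case (3). This dimension count is sound (note $\clr(S)=\cln(I-S)$ is closed, so the identification $\clh/\clr(S)\cong\cln(S^*)$ is legitimate), and your argument is more elementary: it avoids the Calkin algebra, essential spectra, and the similarity reduction entirely, and handles non-self-adjoint $S$ in one pass. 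What the paper's route buys in exchange is the explicit dictionary between the three alternatives and the value of $\sigma_e(T)$ (namely $\{1\}$, $\{0\}$, and $\{0,1\}$), which is conceptually useful elsewhere in Section \ref{sec-ess idemp} but is not needed for the statement itself.
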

\begin{proof}
If $T$ satisfies any of the conditions (1) through (3), then it is easy to see that $T$ is essentially idempotent. Thus we only need to prove the converse. Let $T^2-T\in\clk(\clh)$. By Olsen, as above, there exists $K \in\clk(\clh)$ such that
\[
(T+K)^2-(T+K)=0.
\]
Let $S:=T+K$, so that $S^2=S$.	Assume that $S$ is a projection, that is, $S=S^*$. Then
\[
\begin{split}
T^*T-TT^* & = (S - K^*) (S - K) - (S - K) (S - K^*)
\\
& = S^2 - S^2 + \text{ compact},
\end{split}.
\]
Therefore $T^*T-TT^* \in\clk(\clh)$, which implies that $\pi(T^*T-TT^*) = 0$, that is, $\pi(T)$ is normal. Also note that
\[
\sigma_{e}(T)=\sigma_e (S)\subseteq \sigma(S) \subseteq \{0,1\}.
\]
We have three cases to consider: Suppose $\sigma_e(T) = \{0\}$. Then $\sigma(\pi(T)) = \{0\}$. Hence by the spectral radius formula for normal elements of $C^*$-algebras, it follows that $\pi(T) = 0$ and consequently $T \in \clk(\clh)$.

\noindent Assume now that $\sigma_e(T) = \{1\}$. Then $\sigma(\pi(T - I)) = \{0\}$. Since $\pi(T - I)$ is normal in $\clb(\clh)/\clk(\clh)$, from the argument above we must have that $\pi(T - I) = 0$, and hence $T - I \in \clk(\clh)$.

\noindent Finally, assume that $\sigma_e(T)=\{0,1\}$. Then $\sigma_e(S)=\{0,1\}$. But $S$ is a projection, and hence both $\cln(S^*)$ and $\clr(S)$ are infinite dimensional. Moreover, $S=T+K$ implies that $T-S \in \clk(\clh)$.

\noindent Now we consider the case where $S$ is not necessarily self-adjoint. By Corollary \ref{cor: idemp proj}, we have that $S = V P_{\clr(S)} V^{-1}$, where
\[
V = \begin{bmatrix}
I_{\clr(S)}& P_{\clr(S)}|_{\cln(S)} (P_{\cln(S^*)}|_{\cln(S)})^{-1}
\\
0& I_{\cln(S^*)}
\end{bmatrix}\in \clb(\clr(S) \oplus \cln(S^*)).
\]
Since $S = T+ K_1$, there exists $\tilde{K}\in\clk(\clh)$ such that
\[
P_{\clr(S)} = V^{-1}TV+\tilde{K}.
\]
Then, we know from the above argument that one of the following holds:
\begin{enumerate}
	\item $V^{-1}TV-I\in\clk(\clh)$,
	\item $V^{-1}TV\in\clk(\clh)$, or
	\item $V^{-1}TV- P_{\clr(S)} \in \clk(\clh)$ and both $\clr(S)$ and $\cln(S^*)$ are infinite-dimensional,
\end{enumerate}
which is equivalent to saying that $T-I\in\clk(\clh)$, or $T\in\clk(\clh)$, or there exists an idempotent $S_1 \in \clb(\clh)$ such that $T-S_1\in\clk(\clh)$ and both $\clr(S_1)$ and $\cln(S_1^*)$ are infinite-dimensional. This completes the proof of the theorem.
\end{proof}

We apply the above theorem to representations of essentially commuting pairs of idempotents. Recall that a pair of bounded linear operators $(T_1, T_2)$ on $\clh$ is called essentially commuting if $T_1T_2-T_2T_1 \in \clk(\clh)$.

\begin{corollary}\label{thm compact commutator}
Let $(T_1,T_2)$ be a pair of essentially commuting idempotents on $\clh$. Then one of the following holds:
\begin{enumerate}
\item $T_1T_2,\,T_2T_1\in\clk(\clh)$.
\item $T_1(I-T_2)$ and $T_2(I-T_1)$ are in $\clk(\clh)$.
\item There exists an idempotent $S\in\clb(\clh)$ such that $T_1T_2S,\,T_2T_1S\in\clk(\clh)$.
\end{enumerate}
\end{corollary}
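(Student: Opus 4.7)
The strategy I would follow is to show that the product $T_1T_2$ is itself essentially idempotent and then feed this into Theorem~\ref{prop essential idempotent}. The verification is short: writing $T_2T_1 = T_1T_2 + D$ with $D \in \clk(\clh)$, the identities $T_1^2 = T_1$ and $T_2^2 = T_2$ give
\[
(T_1T_2)^2 = T_1(T_2T_1)T_2 = T_1^2 T_2^2 + T_1 D T_2 = T_1T_2 + T_1 D T_2,
\]
so $(T_1T_2)^2 - T_1T_2 \in \clk(\clh)$. Theorem~\ref{prop essential idempotent} applied to $T_1T_2$ then produces a trichotomy according to the essential spectrum of $T_1T_2$, and the plan is to match each alternative to one of (1)--(3).

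The two extremal cases are essentially immediate. If $T_1T_2 \in \clk(\clh)$, then essential commutativity gives $T_2T_1 = T_1T_2 + (T_2T_1 - T_1T_2) \in \clk(\clh)$, which is conclusion (1). If $T_1T_2 - I \in \clk(\clh)$, I would write $T_1T_2 = I + K$ with $K$ compact and multiply on the left by $T_1$: the relation $T_1^2 = T_1$ forces $T_1 + T_1K = I + K$, hence $T_1 - I = (I - T_1)K \in \clk(\clh)$. A symmetric right multiplication by $T_2$ using $T_2^2 = T_2$ gives $T_2 - I \in \clk(\clh)$. Consequently $T_1(I - T_2)$ and $T_2(I - T_1)$ are both compact, which is conclusion (2).

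The substantive case is the remaining one, where Theorem~\ref{prop essential idempotent} produces an idempotent $S_0 \in \clb(\clh)$ with $T_1T_2 - S_0 \in \clk(\clh)$ and both $\clr(S_0)$, $\cln(S_0^*)$ infinite-dimensional. The right idempotent to use in the conclusion is $S := I - S_0$, not $S_0$ itself. With $T_1T_2 = S_0 + K$, $K \in \clk(\clh)$, one computes
\[
T_1T_2(I - S_0) = (S_0 + K)(I - S_0) = (S_0 - S_0^2) + K(I - S_0) = K(I - S_0) \in \clk(\clh),
\]
and then $T_2T_1(I - S_0) = T_1T_2(I - S_0) + (T_2T_1 - T_1T_2)(I - S_0) \in \clk(\clh)$, which yields conclusion (3) with $S = I - S_0$.

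The only point where a genuine choice is made is this last case: one must recognise that the idempotent $S_0$ supplied by Olsen's theorem is the wrong object for the conclusion (multiplying by $S_0$ leaves a term $\sim S_0$ rather than $0$), and that the \emph{complementary} idempotent $I - S_0$ is the correct one, so that the factor $S_0^2 - S_0 = 0$ collapses and only compact contributions remain. Everything else is routine compact arithmetic guided by the case split of Theorem~\ref{prop essential idempotent}.
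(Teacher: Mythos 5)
Your proof is correct, but it runs along a genuinely different track from the paper's. The paper starts from the Allan--Zem\'{a}nek identity \eqref{eqn D2}, observes that $R:=(T_1-T_2)^2$ satisfies $R^2-R=D^2+R\cdot 0\in\clk(\clh)$ (since $D\in\clk(\clh)$), applies Theorem \ref{prop essential idempotent} to $R$, and in each of the three resulting cases multiplies $T_1+T_2-2T_1T_2-(\ast)$ by $T_1$ or $T_2$ to land in conclusions (1)--(3); the idempotent $S$ of conclusion (3) is then exactly the one handed over by the classification of $R$. You instead verify that $T_1T_2$ itself is essentially idempotent via $(T_1T_2)^2-T_1T_2=T_1(T_2T_1-T_1T_2)T_2\in\clk(\clh)$ and classify $T_1T_2$ directly. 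Your three cases do not match the paper's case by case (your case $T_1T_2-I\in\clk(\clh)$ forces the much stronger conclusion $T_1-I,\,T_2-I\in\clk(\clh)$, which is only a subcase of the paper's route to (2)), but each case does land in one of (1)--(3), and the trichotomy is exhaustive, so the argument is complete. Your observation that the idempotent supplied by the classification must be replaced by its complement $I-S_0$ is correct and is the one nontrivial twist your route requires; it has no counterpart in the paper's proof. What the paper's detour through $(T_1-T_2)^2$ buys is the package of intermediate compactness relations ($(T_1-T_2)^2-I$, $(T_1-T_2)^2$, $(T_1-T_2)^2-S$ compact) that are quoted immediately afterwards to deduce Corollary \ref{cor: ess comm idemp}; your more direct argument proves the present corollary cleanly but would not by itself yield those refinements.
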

\begin{proof}
Since $T_1$ and $T_2$ are idempotents, by \eqref{eqn D2}, we know that
\[
D^2=(T_1-T_2)^4-(T_1-T_2)^2,
\]
where $D = T_1 T_2 - T_2 T_1$. By assumption, $D \in \clk(\clh)$, and hence $R^2-R\in\clk(\clh)$, where
\[
R:=(T_1-T_2)^2.
\]
We know by Theorem \ref{prop essential idempotent} that $R-I$ or $R$ or $R-S$ belongs to $\clk(\clh)$, where $S$ is an idempotent on $\clh$. Let us first assume that $R-I\in\clk(\clh)$. Then
\[
(T_1-T_2)^2-I\in\clk(\clh),
\]
or, equivalently
\[
T_1+T_2-T_1T_2-T_2T_1-I\in\clk(\clh).
\]
Since $D\in\clk(\clh)$, we have necessarily
\[
T_1+T_2-2T_1T_2-I\in\clk(\clh).
\]
Multiplying by $T_1$ on the left, we deduce that $T_1T_2\in\clk(\clh)$. Next, assume that $R\in\clk(\clh)$, that is
\[
T_1+T_2 - T_1T_2 - T_2 T_1 \in\clk(\clh).
\]
By substituting the compact operator $D$ in this expression, we get
\[
T_1+T_2-2T_1T_2\in\clk(\clh).
\]
Again, multiplying by $T_1$ on the left, we conclude that $T_1(I-T_2)\in\clk(\clh)$. Similarly, multiplying by $T_2$ on the right, we find $T_2(I-T_1) \in\clk(\clh)$.

\noindent Finally, if $R-S\in\clk(\clh)$ for some idempotent $S \in \clb(\clh)$, then $T_1+T_2-2T_1T_2-S\in\clk(\clh)$. Multiplying by $T_1$ on the left, we get $T_1-T_1T_2-T_1S\in\clk(\clh)$. Using $D\in\clk(\clh)$, it follows that
\[
T_1-T_2T_1-T_1S\in\clk(\clh).
\]
Now multiplying by $T_2$ from the left, we have $T_2T_1S\in\clk(\clh)$.
\end{proof}

Evidently, the first conclusion is covered by the third. Moreover, a closer look at the proof reveals the following: $R - I \in \clk(\clh)$ implies that
\[
(T_1-T_2)^2-I,\,T_1T_2,\,T_2T_1\in\clk(\clh),
\]
and $R \in \clk(\clh)$ yields
\[
(T_1-T_2)^2,\,T_1(I-T_2),\,T_2(I-T_1)\in\clk(\clh),
\]
and finally, $R - S \in \clk(\clh)$ implies
\[
(T_1-T_2)^2-S,\,T_1T_2S,\,T_2T_1S\in\clk(\clh),
\]
for some idempotent $S\in\clb(\clh)$. The above three conditions yield the following corollary:

\begin{corollary}\label{cor: ess comm idemp}
Let $(T_1,T_2)$ be a pair of essentially commuting idempotents on $\clh$. Then one of the following holds:
\begin{enumerate}
\item $T_1+T_2-I\in\clk(\clh)$,
\item $T_1-T_2\in\clk(\clh)$,
\item there exists an idempotent $S \in \clb(\clh)$ such that $(T_1+T_2-I)S\in\clk(\clh)$.
\end{enumerate}
\end{corollary}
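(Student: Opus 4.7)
\textbf{Proof plan for Corollary \ref{cor: ess comm idemp}.} The plan is to read off the three conclusions directly from the three cases already established in Corollary \ref{thm compact commutator} and the "closer look" comment that follows it. Setting $D=T_1T_2-T_2T_1$ and $R=(T_1-T_2)^2$, the identity $D^2=R^2-R$ together with $D\in\clk(\clh)$ shows that $R$ is essentially idempotent, so Theorem \ref{prop essential idempotent} delivers exactly three alternatives: $R-I\in\clk(\clh)$, $R\in\clk(\clh)$, or $R-S\in\clk(\clh)$ for some idempotent $S$ with $\clr(S),\cln(S^*)$ both infinite dimensional. I will treat these three cases in turn.

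In the first case, I would expand $R = T_1+T_2-T_1T_2-T_2T_1$ (using $T_i^2=T_i$) and invoke $D\in\clk(\clh)$ to rewrite $R\equiv T_1+T_2-2T_1T_2$ modulo $\clk(\clh)$. The first half of the proof of Corollary \ref{thm compact commutator} shows $T_1T_2\in\clk(\clh)$ in this case, so $R\equiv T_1+T_2$, and combining with $R\equiv I$ gives $T_1+T_2-I\in\clk(\clh)$, which is (1).

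In the second case, Corollary \ref{thm compact commutator} already gives $T_1(I-T_2), T_2(I-T_1)\in\clk(\clh)$. Using $D\in\clk(\clh)$, I would observe that $T_1 \equiv T_1T_2\equiv T_2T_1$ and $T_2\equiv T_2T_1$ modulo $\clk(\clh)$; chaining these congruences yields $T_1-T_2\in\clk(\clh)$, which is (2). In the third case, the same computation in Corollary \ref{thm compact commutator} gives $T_1T_2S, T_2T_1S\in\clk(\clh)$. Multiplying the congruence $T_1+T_2-2T_1T_2\equiv S$ on the right by $S$ and using $S^2=S$ together with $T_1T_2S\in\clk(\clh)$ produces $(T_1+T_2)S\equiv S$ modulo $\clk(\clh)$, i.e., $(T_1+T_2-I)S\in\clk(\clh)$, which is (3).

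No step here is really an obstacle, since everything is a short algebraic manipulation in the Calkin algebra once Theorem \ref{prop essential idempotent} is invoked on $R$; the only point to be careful about is that the three cases of Theorem \ref{prop essential idempotent} applied to $R$ line up with the three cases of the conclusion in a very specific way, and that the intermediate facts $T_1T_2\in\clk(\clh)$ (Case 1), $T_1(I-T_2),T_2(I-T_1)\in\clk(\clh)$ (Case 2), and $T_1T_2S,T_2T_1S\in\clk(\clh)$ (Case 3) must all be carried over verbatim from the proof of Corollary \ref{thm compact commutator}, rather than re-derived.
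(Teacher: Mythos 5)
Your proposal is correct and follows exactly the route the paper takes: the paper derives Corollary \ref{cor: ess comm idemp} from the three refined conclusions extracted from the proof of Corollary \ref{thm compact commutator} (namely $(T_1-T_2)^2-I,\,T_1T_2,\,T_2T_1\in\clk(\clh)$, or $(T_1-T_2)^2,\,T_1(I-T_2),\,T_2(I-T_1)\in\clk(\clh)$, or $(T_1-T_2)^2-S,\,T_1T_2S,\,T_2T_1S\in\clk(\clh)$), and your case-by-case Calkin-algebra manipulations are precisely the verification the paper leaves implicit. No gaps.
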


The reverse directions of Corollaries \ref{thm compact commutator} and \ref{cor: ess comm idemp} are not true in general. For instance:

\begin{example}\label{example: conv}
Consider two noncompact operators $A_1$ and $A_2$ on $\clh$, and define idempotents $T_1$, $T_2$ and $S$ on $\tilde{\clh} := \clh \oplus \clh \oplus\clh$ by
\[
T_1 =
\begin{bmatrix}
I&0&A_1
\\
0&I&A_2
\\
0&0&0
\end{bmatrix}
\text{ and }
T_2=\begin{bmatrix}
0&0&0
\\
0&0&0
\\
0&0&I
\end{bmatrix},
\]
and
\[
S=\begin{bmatrix}
I&0&0
\\
0&0&0
\\
0&0&0
\end{bmatrix}.
\]
By a simple computation, it follows that
\[
T_1 T_2 S = T_2 T_1 S =  (T_1+T_2-I)S=0\in\clk(\tilde \clh).
\]
In particular, we have $T_1 T_2 S = T_2 T_1 S \in\clk(\tilde{\clh})$, whereas, on the other hand, we have
\[
T_1 T_2 - T_2 T_1 =
\begin{bmatrix}
0&0&A_1
\\
0&0&A_2
\\
0&0&0
\end{bmatrix}\notin\clk(\tilde{\clh}).
\]
Therefore, condition (3) in Corollary \ref{thm compact commutator}, as well as in Corollary \ref{cor: ess comm idemp}, does not imply that $T_1 T_2 - T_2 T_1$ is compact.
\end{example}

On the other extreme, condition (1) or (2) of Corollary \ref{cor: ess comm idemp} implies that the commutator is essentially compact: Let $(T_1, T_2)$ be a pair of idempotents on $\clh$. Suppose
\[
T_1+T_2-I\in\clk(\clh).
\]
Then by multiplying $T_1+T_2-I$ by $T_1$ and $T_2$ on the left, respectively, it follows that $T_1T_2,\,T_2T_1\in\clk(\clh)$, which immediately implies that $T_1 T_2 - T_2 T_1 \in \clk(\clh)$. Next, we assume that
\[
T_1-T_2\in\clk(\clh).
\]
Multiplying this on the left with $T_1$ and $T_2$, respectively, we have
\[
T_1-T_1T_2,\,T_2T_1-T_2 \in \clk(\clh).
\]
Then
\[
T_1 T_2 - T_2 T_1 = -(T_1-T_1T_2)-(T_2T_1-T_2)+(T_1-T_2)\in\clk(\clh),
\]
that is, $T_1 T_2 - T_2 T_1 \in\clk(\clh)$. Therefore, in either case, the pair $(T_1, T_2)$ is essentially commuting.

Finally, in the context of difference of two idempotents and part (2) of Corollary \ref{cor: ess comm idemp}, we refer the reader to \cite{Hartwig} and \cite{Wu P}.

\smallskip

\noindent\textbf{Acknowledgement:}
The research of first author is supported by the Theoretical Statistics and Mathematics Unit,  Indian Statistical Institute, Bangalore, India, and the research of the second author is supported by the NBHM post-doctoral fellowship, Department of Atomic Energy (DAE), Government of India (File No: 0204/16(20)/2020/R\&D-II/10). The third named author is supported in part by the Core Research Grant (CRG/2019/000908), by SERB (DST), Government of India.

\end{document}